\newtheorem{theorem}{Theorem}[section]
\newtheorem{lemma}[theorem]{Lemma}
\newcommand{\xymap}[3]{\xymatrix@1{
#1 \ar[r]^-{_{#3}}  & #2  }
}
\newcommand{\minibracket}[1]{{\text{\small (}}#1{\text{\small )}}}
\newcommand{\cat}[1]{\text{\textup{\textsf{#1}}}}
\newcommand{\fpmod}[1]{\cat{fpmod}(#1)}
\newcommand{\mmod}{\cat{mod}}  
\newcommand{\tses}[3]{0\rightarrow #1\rightarrow #2 \rightarrow%
#3\rightarrow 0}%
\newcommand{\shortexactsequence}[3]{\xymatrix@1{
0 \ar[r]  & #1 \ar[r]  & #2 \ar[r]  & #3 \ar[r]  & 0  }
}
\newcommand{\map}[3]{#1 \colon #2 \to #3}
\newcommand{\tensor}{\otimes}
\newcommand{\N}{\mathbb{N}}
\newcommand{\Mod}[1]{\cat{Mod}(#1)}
\newcommand{\textmap}[2]{\left(\begin{smallmatrix}
#1\\ #2
\end{smallmatrix}
\right)}
\newcommand{\define}[1]{\textit{#1}}
\newcommand{\pullses}[5]{%
\xymatrix{%
0 \ar[r]  & #1 \ar[r] \ar@<0.5ex>@{-}[d] \ar@<-0.5ex> @{-}[d]  & #2%
\ar[r] \ar[d] & #3 \ar[r] \ar[d] & 0\\%
0 \ar[r]  & #1 \ar[r]  & #4 \ar[r]  & #5 \ar[r]  & 0  }%
}%
\newcommand{\pullsesalong}[6]{%
\xymatrix{%
0 \ar[r]  & #1 \ar[r] \ar@<0.5ex>@{-}[d] \ar@<-0.5ex> @{-}[d]  & #2%
\ar[r] \ar[d] & #3 \ar[r] \ar[d]^{#6} & 0\\%
0 \ar[r]  & #1 \ar[r]  & #4 \ar[r]  & #5 \ar[r]  & 0  }%
}%
\newcommand{\pushses}[5]{%
\xymatrix{%
0 \ar[r]  & #1 \ar[r] \ar[d]  & #2 \ar[r] \ar[d] & #3 \ar[r] \ar@<0.5ex>%
@{-}[d]  \ar@<-0.5ex> @{-}[d]   & 0\\%
0 \ar[r]  & #4 \ar[r]  & #5 \ar[r]  & #3 \ar[r]  & 0  }%
}%
\newcommand{\pushsesalong}[6]{%
\xymatrix{%
0 \ar[r]  & #1 \ar[r] \ar[d]^{#6}  & #2 \ar[r] \ar[d] & #3 \ar[r] \ar@<0.5ex>%
@{-}[d]  \ar@<-0.5ex> @{-}[d]   & 0\\%
0 \ar[r]  & #4 \ar[r]  & #5 \ar[r]  & #3 \ar[r]  & 0  }%
}%
\newcommand{\Qnloops}{%
\underset{_{n \text{ arrows}}}{%
\xymatrix{%
\cdot \ar@(dl,ul)[] \ar@(ur,dr)[] \ar@(ul,ur)@{.>}[]%
}}}%
\newcommand{\QnKronecker}{%
\underset{_{n \text{ arrows}}}{\xymatrix{%
\cdot \ar@<1.2ex>[r] \ar@<-1.2ex>[r] \ar@<0.9ex>@{}[r] |{\cdot}%
\ar@<0.3ex>@{}[r] |{\cdot} \ar@<-0.3ex>@{}[r] |{\cdot}%
\ar@<-0.9ex>@{}[r] |{\cdot} &\cdot%
}}%
}%
\newcommand{\QnA}{%
\underset{_{n \text{ vertices}}}{\xymatrix{%
\cdot \ar[r] &\cdot\  \ar@{}[r] |{\cdots\cdots\cdots} &\ \cdot \ar[r] &\cdot%
}}%
}%
\newcommand{\commsquare}[8]{\xymatrix{%
#1 \ar[r]^-{#5} \ar[d]_-{#6}  & #2 \ar[d]^-{#7} \\%
#3 \ar[r]_-{#8}  & #4  }%
}%
\DeclareMathOperator{\Hom}{Hom}
\DeclareMathOperator{\End}{End}
\DeclareMathOperator{\im}{im}
\DeclareMathOperator{\cok}{cok}
\DeclareMathOperator{\Ext}{Ext}
\DeclareMathOperator{\Tor}{Tor}
\begin{document}

\title{Severe right Ore sets and universal localisation}
\author{Aidan Schof\i eld}
\maketitle

\begin{abstract} 
We introduce the notion of a severe right Ore set in the main as a
tool to study universal localisations of rings but also to provide a
short proof of P. M. Cohn's classification of homomorphisms from a
ring to a division ring. We prove that the category of finitely
presented modules over a universal localisation is equivalent to a
localisation at a severe right Ore set of the category of finitely
presented modules over the original ring. This allows us to describe
the structure of finitely presented modules over the universal
localisation as modules over the original ring. 
\end{abstract} 

\section{Introduction}
 
The main purpose of this paper is to introduce a type of right Ore
set in an additive category with cokernels and to demonstrate the use
of this notion in two ways. The first is to provide a short proof of
P. M. Cohn's characterisation of epimorphisms from a given ring to
division rings; the second is to study universal localisation. 

The main theorem we prove about universal localisation is that the
category of finitely presented modules over a universal localisation
is a right Ore localisation of the category of finitely presented
modules over the original ring at the severe right Ore generated by
the maps between finitely generated projective modules we wish to
invert (see section \ref{sec:sevrO} for the definition of a severe
right Ore set). To some extent, this result is a surprise since
another approach to the study of universal localisation would be to
study the derived category of the universal localisation and here a
corresponding result fails to be true; the derived category of the
universal localisation can fail to be the right perpendicular category
to the maps between finitely generated projective modules considered
as objects in the derived category (see \cite{RanNeeman}).

This allows us to give a module-theoretic description of the finitely
presented modules over the universal localisation as modules over the
original ring. From this we can give a description of the kernel of
the homomorphism from a module to the induced module over the
universal localisation. Although this answer is useful, there are many
situations where we want a simpler condition. Specifically we should
like to be able to say that this kernel is simply the torsion
submodule with respect to the torsion theory generated by the
cokernels of the maps between finitely generated projective modules we
invert. This is false in general; however, we provide a simple and
fairly general sufficient condition on the universal localisation for
this to hold. When this does hold we can provide detailed information
about the universal localisation so we investigate these particular
universal localisations further in the final section. In particular,
we show that for these universal localisations
$\Tor_{i}^{R}(R_{\Sigma}, R_{\Sigma}) $ vanishes for $i>0$ which is
the condition required in \cite{RanNeeman} so that the derived
category of the universal localisation should be the right
perpendicular category to the maps between finitely generated
projective modules considered as objects in the derived category and
hence to construct a long exact sequence for universal localisation in
algebraic $K$-theory as demonstrated in \cite{RanNeeman}.  

In a subsequent paper we shall use these results to describe the
universal localisations of hereditary rings very precisely. We can
find all possible universal localisations in terms of suitable
subcategories of the category of finitely presented modules over the
original ring; we can then describe the category of finitely presented
bound modules over the universal localisation as being equivalent to a
suitable subcategory of finitely presented bound modules over the
original ring and we can describe the finitely generated projective
modules over the universal localisation in terms of the submodules of
the cokernels of the maps we invert.

\section{Severe right Ore sets} 
\label{sec:sevrO} 

We recall the definition of a right Ore set in a small additive
category $\cat{A}$.  A set of maps $\sigma$ in a small additive
category is said to be a right Ore set if the following conditions are
satisfied:
\begin{enumerate}
\item The set $\sigma$ contains all isomorphisms in $\cat{A}$. 
\item \define{(Closed under composition).} If $s,t\in \sigma$ and
$st$ exists then $st\in \sigma$.
\item \define{(Common right multiples exist).} If $s \colon A
\rightarrow B$ lies in $\sigma$ and $a \colon A \rightarrow C$ is some
map in the category, there exist $t\in \sigma$ and a map $b$ such that
$at=sb$. 
\item \define{($\sigma$ is right revengeful).} Suppose $s\in
\sigma$ and $a$ is a map such that $sa=0$; then there exists $t\in
\sigma$ such that $at=0$. 
\end{enumerate}

We say that two maps $a,b$ in a category are \define{isomorphic}
if there exist isomorphisms $u,v$ such that $b=uav$.  

Although it is certainly possible in the definition of a right Ore set
to get by without the first assumption and to modify the second
assumption to the condition that $st$ is isomorphic to a map in
$\sigma$, we gain nothing by doing so.

Given a right Ore set in an additive category $\cat{A}$, we are able
to describe the maps in the category $\cat{A}_{\sigma}$ very
precisely. Every map may be written in the form $as^{-1}$ and
$as^{-1}=bt^{-1}$ if and only if there exist maps $u,v\in \sigma$ such
that $au=bv$ and $su=tv$; in particular, $as^{-1}=0$ if and only if
there exists $u\in \sigma$ such that $au=0$. We leave it to the reader
to check that this is true or to consult \cite{GabZis}.
%%% Gabriel-Zisman. 

We use the notation $[A]$ for the image of $A$ in the
category $\cat{A}_{\sigma}$ where $\sigma$ is a right Ore set in
$\cat{A} $.  

Before stating the next theorem, we should point out that we shall be
working with additive categories with cokernels as our standard
type of category for much of this paper. The reason for this is that
the category of finitely presented modules over a ring $R$ is of this type
and this category is only an abelian category if the ring is
coherent. We shall use the notation $\fpmod{R}$ for this category of
finitely presented modules over the ring $R$. We may and will regard
$\fpmod{R}$ as a small category.

Given a functor between additive categories with cokernels
we say that this functor is right exact if and only if it preserves
cokernels. We shall occasionally say that a sequence $A\to B\to C\to
0$ is exact by which we mean simply that the map from $B$ to $C$ is
the cokernel of the map from $A$ to $B$.
 
We note the following standard lemma which is usually
stated in the context of the full module category over the ring $R$
and a right exact functor to an abelian category but whose proof is
identical and obvious in this context. 

\begin{lemma}
\label{lem:charrex}
Let $\map{\phi}{\fpmod{R}}{\cat{A}}$ be a right exact functor where
$\cat{A}$ is an additive category with cokernels. Then $\phi$ is
naturally equivalent to $\tensor_{R} \phi(R) $.
\end{lemma}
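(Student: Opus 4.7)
The plan is to build the candidate functor $-\otimes_{R}\phi(R)$ by hand inside $\cat{A}$ using cokernels, and then to exhibit the natural isomorphism to $\phi$ by applying $\phi$ to a finite presentation.

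First I would give meaning to $-\otimes_{R}\phi(R)$ as a functor $\fpmod{R}\to\cat{A}$. The object $\phi(R)$ acquires a canonical left $R$-action through the ring map $R\to\End_{\cat{A}}(\phi(R))$ sending $r$ to $\phi(\rho_{r})$, where $\rho_{r}\colon R\to R$ is right multiplication by $r$. Hence any matrix $A\in M_{n\times m}(R)$ induces a morphism $A_{*}\colon \phi(R)^{m}\to\phi(R)^{n}$ in $\cat{A}$. For $M\in\fpmod{R}$ choose a finite presentation $R^{m}\xrightarrow{A}R^{n}\to M\to 0$ and define $M\otimes_{R}\phi(R):=\cok(A_{*})$. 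Standard arguments, using that two presentations of $M$ are connected by a chain of maps between them and that the cokernel is functorial on such data, show that this construction is independent of the chosen presentation up to canonical isomorphism and extends to a functor on $\fpmod{R}$.

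Next I would construct the natural transformation $\eta_{M}\colon M\otimes_{R}\phi(R)\to\phi(M)$. Apply $\phi$ to the presentation $R^{m}\xrightarrow{A}R^{n}\to M\to 0$; since $\phi$ is additive it carries the identification $\phi(R^{k})\cong\phi(R)^{k}$, and since $\phi$ preserves cokernels the resulting sequence
\[
\phi(R)^{m}\xrightarrow{\phi(A)}\phi(R)^{n}\to\phi(M)\to 0
\]
is exact in the sense of the paper. Moreover, unwinding the construction shows $\phi(A)=A_{*}$, so $\phi(M)$ is a cokernel of exactly the same map as $M\otimes_{R}\phi(R)$. The universal property of cokernels supplies a unique isomorphism $\eta_{M}$ making the relevant diagram commute.

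Finally I would check naturality. Any morphism $f\colon M\to N$ in $\fpmod{R}$ lifts (non-uniquely) to a morphism of chosen presentations; applying $\phi$ and invoking the functoriality of the cokernel construction simultaneously on both sides produces the commuting square, and any two lifts of $f$ differ by a map factoring through the kernel of the presentation, hence induce the same map on cokernels. The only real subtlety—and the step I would treat most carefully—is the independence of $M\otimes_{R}\phi(R)$ and of $\eta_{M}$ from the chosen presentation, since $\fpmod{R}$ is not assumed abelian; this is handled by the usual comparison lemma for presentations, which relies only on the freeness of $R^{k}$ and the universal property of cokernels and therefore goes through in any additive category with cokernels.
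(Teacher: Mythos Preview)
The paper does not actually supply a proof of this lemma: it is stated as a ``standard lemma'' whose ``proof is identical and obvious in this context,'' and no argument is given. Your proposal is precisely the standard argument the paper is alluding to---define $\otimes_{R}\phi(R)$ on a finitely presented module via the cokernel of the induced map on copies of $\phi(R)$, and use right exactness of $\phi$ to identify this with $\phi(M)$---so there is nothing to compare; your approach \emph{is} the intended one.

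One small cosmetic point: in the paper's conventions $\fpmod{R}$ consists of right $R$-modules (so that $\otimes_{R}R_{\Sigma}$ makes sense), and the endomorphisms of the right module $R$ are given by \emph{left} multiplications $\lambda_{r}(x)=rx$, not right multiplications. Your $\rho_{r}$ should therefore be $\lambda_{r}$; the rest of the argument is unaffected.
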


\begin{theorem}
\label{th:Olocrex}
Let $\cat{A}$ be an additive category with cokernels and let $\sigma$
be a right Ore set. Then $\cat{A}_{\sigma}$ is an additive
category with cokernels and the functor from $\cat{A}$ to
$\cat{A}_{\sigma}$ is right exact. 
\end{theorem}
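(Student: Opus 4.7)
The plan is to verify the additive structure on $\cat{A}_\sigma$, construct cokernels directly from the calculus of fractions, and observe that right exactness of the localisation functor is built into this cokernel construction.

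For additivity, objects of $\cat{A}_\sigma$ are the same as those of $\cat{A}$, so $[A] \oplus [B]$ is represented by $[A \oplus B]$; the required universal property is verified by bringing any pair of morphisms into $[A]$ and $[B]$ over a common denominator using the common right multiples axiom. On a hom-set, any pair of maps can be written $f = [a][s]^{-1}$ and $g = [b][s]^{-1}$ with a single denominator, and we set $f + g = [a+b][s]^{-1}$; well-definedness and bilinearity of composition follow from the characterisation of when two fractions are equal recalled immediately before the theorem (see \cite{GabZis}).

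For cokernels, any morphism in $\cat{A}_\sigma$ has the form $[a][s]^{-1}$, and since $[s]$ is an isomorphism this morphism has the same cokernel as $[a]$. It therefore suffices to show that if $c \colon B \to C$ is the cokernel in $\cat{A}$ of some $a \colon X \to B$, then $[c]$ is the cokernel of $[a]$ in $\cat{A}_\sigma$; this will simultaneously yield existence of cokernels in $\cat{A}_\sigma$ and right exactness of the quotient functor. The relation $[c][a] = [ca] = 0$ is immediate. For the universal property, let $g \colon [B] \to [D]$ satisfy $g[a] = 0$ and write $g = [b][t]^{-1}$ with $t \in \sigma$. The Ore condition applied to $a$ and $t$ produces $u \in \sigma$ and $a'$ with $au = ta'$, so that $g[a] = [ba'][u]^{-1}$; vanishing of this fraction, combined with the revengeful axiom, yields $v \in \sigma$ with $ba'v = 0$ in $\cat{A}$.

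The main obstacle is to convert these relations into an actual factorisation $g = h[c]$ in $\cat{A}_\sigma$. A direct attempt to factor $b$ through $ct$ fails because $ct$ need not lie in $\sigma$. My plan is instead to produce an equivalent fraction representation $(t', b')$ of $g$ in which the numerator $b' \colon T' \to D$ factors as $\phi \circ c \circ t'$ for some $\phi \colon C \to D$ in $\cat{A}$, so that $h := [\phi]$ serves as the desired lift. This rewriting is built from the equations $ta'v = a(uv)$ and $ba'v = 0$ by further applications of common right multiples and revengefulness, combined with the universal property of $c$ as a cokernel in $\cat{A}$ itself. Uniqueness of $h$ follows from the zero-of-a-fraction criterion applied to any putative difference, and right exactness of the quotient functor drops out as the statement that cokernels in $\cat{A}$ are sent to cokernels in $\cat{A}_\sigma$.
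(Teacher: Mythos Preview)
Your reduction to the case $s=1$ (show that $[c]$ is a cokernel of $[a]$, since precomposing with the isomorphism $[s]^{-1}$ does not affect cokernels) is correct and is essentially what the paper does. After that, however, the argument has a genuine gap.

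The step ``the Ore condition applied to $a$ and $t$ produces $u\in\sigma$ and $a'$ with $au=ta'$'' is not well-formed. With a \emph{right} Ore set a fraction $[B]\to[D]$ is a cospan $B\xrightarrow{b}Z\xleftarrow{t}D$ with $t\in\sigma$, and the common-right-multiples axiom asks for two maps with a common \emph{domain}; no pair among $a\colon X\to B$, $b\colon B\to Z$, $t\colon D\to Z$ has one. The equation $au=ta'$ you write is the shape of a \emph{left} Ore completion. More seriously, you then stop at the decisive moment, naming a ``main obstacle'' and offering only a plan for the factorisation $g=h[c]$ rather than producing it.

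In fact there is no obstacle, and once you have reduced to $s=1$ no common-right-multiples step is needed at all. In the paper's (left-to-right) notation: write the test map as $g=bt^{-1}$ with $b\colon B\to Z$ and $t\colon D\to Z$ in $\sigma$. Then $[a]\,g=(ab)t^{-1}=0$, so by the zero criterion there exists $u\in\sigma$ with $abu=0$ in $\cat{A}$; since $c=\cok(a)$ there is $d$ with $bu=cd$, whence
\[
g=bt^{-1}=(bu)(tu)^{-1}=c\,d(tu)^{-1},
\]
the desired factorisation through $[c]$. This is exactly the paper's proof (specialised to $s=1$): it uses only the revengeful axiom, via the zero criterion, together with the universal property of $c$ in $\cat{A}$. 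The extra Ore step you attempted is what manufactured your obstacle.
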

\begin{proof}
Let $b \colon B \rightarrow C$ be the cokernel of $a \colon A
\rightarrow B$. Let $s \colon D \rightarrow B$ lie in $\sigma$. We
consider the map $sb$ in $\cat{A}_{\sigma}$ which we hope is the
cokernel of $as^{-1}$.  Let $ct^{-1}$ be a map such that
$as^{-1}ct^{-1}=0$. There exist $c'\in \cat{A}$ and $s'\in \sigma$
such that $sc'=cs'$ in $\cat{A}$ and so $s^{-1}c = c's'^{-1}$ and
$0=ac'(ts')^{-1}$. Hence there exists $u\in \sigma$ such that $ac'u=0$
in $\cat{A}$. So $c'u = bd$ for some map $d$ in $\cat{A}$. Therefore
$ct^{-1}=sc's'^{-1}t^{-1} = sc'u (ts'u)^{-1} = sbd(ts'u)^{-1}$ as
needed. Hence $sb$ is the cokernel of $as^{-1}$. Thus
$\cat{A}_{\sigma}$ is an additive category with cokernels and the
functor from $\cat{A}$ to $\cat{A}_{\sigma}$ is right exact since the
cokernel of $a=a1^{-1}$ is still $b$.
\end{proof}

Now suppose that $\cat{A}$ is an additive category with cokernels. 
Let $a \colon A \rightarrow B$ and $b \colon A \rightarrow C$ be maps
in $\cat{A}$. Let $\left(\begin{smallmatrix}
b'\\
a'
\end{smallmatrix}\right)\colon B\oplus C\rightarrow \cok (a\ b)  
$ be the canonical map to the cokernel; then we call $a'$ the
\define{pushout} of $a$ along $b$; similarly, $b'$ is the pushout
of $b$ along $a$. By definition, $ab'=-ba'$.  

Let $\sigma$ be a set of maps in the small category $\cat{A}$. We say
that $\sigma$ is a severe right Ore set if it satisfies the following
conditions:
\begin{enumerate}
\item The set $\sigma$ contains all isomorphisms.
\item It is closed under composition.
\item \define{(Closed under pushout).} Given $s \colon A
\rightarrow B$ in $\sigma$ and $a \colon A \rightarrow C$, the pushout
of $s$ along $a$ lies in $\sigma$.
\item \define{(Severely right revengeful or closed under cokernels
of right killers).} If $s \colon A \rightarrow B$ is in $\sigma$ and
$a \colon B \rightarrow C$ is a map such that $sa=0$ then $t \colon C
\rightarrow \cok a $, the canonical map to the cokernel, also lies in
$\sigma$.   
\end{enumerate}

The last two conditions imply directly the corresponding conditions
for a right Ore set so that the name is justified. There are two good
reasons for considering this definition. Firstly, because a severe
right Ore set is defined by closure conditions, it is possible to talk
of the severe right Ore set generated by a set of maps. The second
reason is the following lemma.

\begin{lemma}
\label{th:lsroc}
Let $\phi\colon R\rightarrow S$ be a ring homomorphism. Let $\sigma$
be the set of maps $\{s\in\fpmod{R}: s\otimes_{R}S \text{ is an
isomorphism}\}$. Then $\sigma$ is a severe right Ore set.
\end{lemma}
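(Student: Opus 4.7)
The plan is to verify each of the four conditions in the definition of a severe right Ore set by exploiting the functor $-\tensor_R S \colon \fpmod{R} \to \fpmod{S}$, which is right exact and hence preserves cokernels and pushouts. All four conditions will reduce to elementary facts about isomorphisms under a right exact functor.

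Conditions (1) and (2) are immediate: if $s$ is an isomorphism in $\fpmod{R}$ then $s\tensor_R S$ is obviously an isomorphism; and if $s\tensor_R S$ and $t\tensor_R S$ are both isomorphisms with $st$ defined, then $(st)\tensor_R S = (s\tensor_R S)(t\tensor_R S)$ is an isomorphism. These require no more than unwinding the definition of $\sigma$.

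For condition (3), given $s \colon A \to B$ in $\sigma$ and $a \colon A \to C$, let $s' \colon C \to D$ be the pushout of $s$ along $a$. Since the pushout is defined via a cokernel and $-\tensor_R S$ is right exact, applying $\tensor_R S$ to the pushout square in $\fpmod{R}$ yields a pushout square in $\fpmod{S}$; the arrow $s'\tensor_R S$ is then the pushout of the isomorphism $s\tensor_R S$ along $a\tensor_R S$, and hence is itself an isomorphism. For condition (4), suppose $s \colon A \to B$ lies in $\sigma$ and $a \colon B \to C$ satisfies $sa = 0$; let $t \colon C \to \cok a$ denote the canonical map. Applying $\tensor_R S$ gives $(s\tensor_R S)(a\tensor_R S) = 0$, and since $s\tensor_R S$ is an isomorphism, this forces $a\tensor_R S = 0$. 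Therefore the cokernel of $a\tensor_R S$ is $C\tensor_R S$ with the identity as its structure map. By right exactness of $\tensor_R S$, this cokernel is canonically identified with $\cok(a)\tensor_R S$, and under this identification $t\tensor_R S$ becomes the identity, so $t \in \sigma$.

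There is no real obstacle here; the whole argument is a routine application of the right exactness of $-\tensor_R S$. The only conceptual point worth noting is that condition (4) rests on the observation that a map killed on the left by an isomorphism is itself zero, which is what makes the cokernel of $a\tensor_R S$ degenerate to $C\tensor_R S$ and thereby forces $t\tensor_R S$ to be an isomorphism.
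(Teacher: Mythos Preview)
Your proof is correct and follows essentially the same approach as the paper's: both verify the four axioms directly by invoking right exactness of $-\tensor_R S$ to reduce the pushout and cokernel conditions to the observations that pushouts of isomorphisms are isomorphisms and that the cokernel of the zero map is an isomorphism. The arguments differ only in wording.
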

\begin{proof}
It is clearly closed under composition and contains all
isomorphisms. Now suppose that $s \colon A \rightarrow B$ lies in
$\sigma$. Let $a \colon A \rightarrow C$ be some map. Let
$f=\left(\begin{smallmatrix} a'\\ s'
\end{smallmatrix}
 \right)\colon B\oplus C\rightarrow D$ be the cokernel of $(s\ a)$. Then,
since $\_\otimes_{R}S$ is right exact, $f\otimes_{R}S$ is still the
cokernel of $(s\ a)\otimes_{R}S$ and so $s'\otimes_{R}S$ is the
pushout of $s\otimes_{R}S$ along $a\otimes_{R}S$. However the pushout
of an isomorphism is an isomorphism. So $\sigma$ is closed under
pushout. 

Now suppose that $s \colon A \rightarrow B$ is in $\sigma$ and $a
\colon B \rightarrow C$ is a map such that $sa=0$. Let $t \colon C
\rightarrow D$ be the cokernel of $a$. Since $s\otimes_{R}S$ is an
isomorphism, $a\otimes_{R}S =0$. Since $\_\otimes_{R}S$ is right exact,
$t\otimes_{R}S$ is the cokernel of $a\otimes_{R}S =0$ and so is an
isomorphism. Thus $\sigma$ is severely right revengeful as well and
hence is a severe right Ore set. 
\end{proof}

The reader may check that the proof above actually shows that for any
right exact functor between small additive category with cokernels the
set of maps inverted by the functor must be a severe right Ore set. 

Let $\Sigma$ be a set of maps between finitely generated projective
modules over the ring $R$. Then we can define $\sigma$ to be the
severe right Ore set generated by $\Sigma$ in $\fpmod{R}$. We can then
form the category $\fpmod{R}_{\sigma}$. Our main aim in this paper is to
show that this category is naturally equivalent to
$\fpmod{R_{\Sigma}}$ via the natural functor induced by
$\otimes_{R}R_{\Sigma}\colon \fpmod{R}\rightarrow \fpmod{R_{\Sigma}}$
which inverts $\Sigma$ and so by lemma \ref{th:lsroc} also inverts
$\sigma$. We return to this kind of severe right Ore set later but for
the moment we return to the general case.

Let $\sigma$ be a severe right Ore set in the category $\fpmod{R}$ we
define a functor $\Gamma$ from $\fpmod{R}_{\sigma}$ to $\Mod{R}$
which we shall refer to as the \define{realisation functor} by
$\Gamma=\Hom_{\fpmod{R}_{\sigma}}([R],\ )$. We shall see that the
realisation functor is a full and faithful functor. We begin by
describing the image of $[M]$ under $\Gamma$. Let ${_{M}\sigma}$ be
the set of maps in $\sigma$ whose domain is $M$. We have a directed
system of modules $\{M_{s}\}$ indexed by ${_{M}\sigma}$ where a morphism
from $M_{s}$ to $M_{u}$ is given by $t \colon M_{s} \rightarrow M_{u}$
where $t\in \sigma$ and $u=st$. There is an initial object $M=M_{1}$
since $1_{M}\in \sigma$. Moreover given $s,t\in {_{M}\sigma}$, the
pushout diagram
\begin{equation*}
\commsquare{M}{M_{s}}{M_{t}}{M_{u}}{s}{t}{s'}{t'} 
\end{equation*}
where $u=st'=s't$ lies in our system because $\sigma$ is closed under
pushout and composition. We define $M_{\sigma}$ to be the direct
limit of this system $\varinjlim_{s\in {_{M}\sigma}}M_s$.

Let $M$ and $N$ be finitely presented modules. Because $M$ is finitely
presented $\Hom(M,N_{\sigma})= \varinjlim_{t\in
{_{N}\sigma}}\Hom(M,N_{t})$. We define $\lambda_{t}\colon
\Hom(M,N_{t})\rightarrow \Hom([M],[N])$ by
$\lambda_{t}(f)=ft^{-1}$. Given a map in $\sigma$, $u\colon
N_{t}\rightarrow N_{tu}$, we see that $(M,u)\lambda_{tu}=\lambda_{t}$
since $\minibracket{f}(M,u)\lambda_{tu} = fu(tu)^{-1} = ft^{-1}$. Thus
we obtain a map $\lambda\colon \varinjlim_{t\in {_{N}\sigma}}\Hom(M,N_{t})
\rightarrow \Hom([M],[N])$ which is visibly surjective.

\begin{lemma}
\label{th:limhom=hom}
Let $M$ and $N$ be finitely presented modules. Then the map
$\lambda\colon \varinjlim_{t\in {_{N}\sigma}}\Hom(M,N_{t})
\rightarrow \Hom([M],[N])$ is an isomorphism. So
$\Hom(M,N_{\sigma})\cong \Hom([M],[N])$. 
\end{lemma}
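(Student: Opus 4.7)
The plan is to observe that surjectivity of $\lambda$ has already been established in the paragraph preceding the lemma, so only injectivity requires argument. I will take a representative $f \in \Hom(M, N_t)$ of an element of the colimit with $\lambda_t(f) = ft^{-1} = 0$ in $\Hom([M], [N])$, and produce a morphism in the directed system along which $f$ becomes zero.

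The key step is to invoke the explicit description of zero maps in a right Ore localization recorded earlier in the section: a map $as^{-1}$ vanishes in $\cat{A}_\sigma$ if and only if there exists $u \in \sigma$ with $au = 0$. Applied with $a = f$ and $s = t$, this yields $u \in \sigma$ with domain $N_t$ such that $fu = 0$ in $\fpmod{R}$. Since $\sigma$ is closed under composition, $tu \in {_{N}\sigma}$, so $u \colon N_t \to N_{tu}$ is a morphism of the directed system, and the transition map $\Hom(M, N_t) \to \Hom(M, N_{tu})$ sends $f$ to $fu = 0$. Hence $f$ represents the zero class in $\varinjlim_{t \in {_{N}\sigma}} \Hom(M, N_t)$, which establishes injectivity.

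For the concluding identification $\Hom(M, N_\sigma) \cong \Hom([M], [N])$, I would note that the index category ${_{N}\sigma}$ is filtered: common upper bounds of any two objects come from the pushout square described before the lemma, and any two parallel morphisms $u, v \colon N_s \to N_t$ (satisfying $su = sv$, so $s(u - v) = 0$) can be coequalized by applying severely right revengeful to their difference, producing a map $w \colon N_t \to Q$ in $\sigma$ with $uw = vw$ and $tw \in \sigma$ by composition-closure. Finite presentation of $M$ then gives $\Hom(M, N_\sigma) = \Hom(M, \varinjlim N_s) \cong \varinjlim \Hom(M, N_s)$, and the lemma follows. The whole argument is essentially a translation between two descriptions of hom-sets; I do not anticipate any genuine obstacle, since the zero-map criterion for a right Ore localization and the requisite closure properties of $\sigma$ have all been put in place already.
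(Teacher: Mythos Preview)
Your proof is correct and follows essentially the same approach as the paper: the paper likewise notes surjectivity beforehand and proves injectivity by the identical argument---choose a representative $f$ with $ft^{-1}=0$, invoke the zero-map criterion for the Ore localisation to find $u\in\sigma$ with $fu=0$, and conclude that $f$ dies in the colimit. Your added remarks on filteredness (coequalising parallel arrows via severe right revengefulness) spell out a point the paper leaves implicit when it calls $\{N_{t}\}$ a directed system, but this is elaboration rather than a different route.
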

\begin{proof}
Suppose that $\lambda(k)=0$. Let $f\in \Hom(M,N_{t})$ represent
$k$. So $ft^{-1}=0$ in $\fpmod{R}_{\sigma}$. Then there exists $u\in
\sigma$ such that $fu=0$ but then $\minibracket{f}(M,u)=fu=0$ and so
$k=0$. Thus $\lambda$ is injective and hence bijective.
\end{proof}

Our first use of this is to identify the images of objects under
$\Gamma$.  

\begin{lemma}
\label{th:realis=dirlim}
Let $\Gamma\colon \fpmod{R}_{\sigma}\rightarrow \Mod{R}$ be the
realisation functor. Then $\Gamma([M])\cong M_{\sigma}$.
\end{lemma}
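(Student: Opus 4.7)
The plan is to deduce this directly from Lemma \ref{th:limhom=hom} by specialising the first argument to $R$. By definition of the realisation functor,
\[
\Gamma([M]) = \Hom_{\fpmod{R}_\sigma}([R],[M]).
\]
Applying Lemma \ref{th:limhom=hom} with $R$ in the role of the first finitely presented module and $M$ in the role of the second, I obtain a natural isomorphism
\[
\Hom_{\fpmod{R}_\sigma}([R],[M]) \;\cong\; \varinjlim_{t\in {_{M}\sigma}} \Hom_R(R, M_t).
\]

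Next I would use the standard natural isomorphism $\Hom_R(R, M_t) \cong M_t$, given by evaluation at $1$, which is compatible with the structure maps of the directed system (each transition map $u \in \sigma$ from $M_t$ to $M_{tu}$ induces post-composition on $\Hom(R,-)$ which corresponds to the map $M_t \to M_{tu}$ itself under this identification). Since direct limits commute with this natural isomorphism term by term, I get
\[
\varinjlim_{t\in {_{M}\sigma}} \Hom_R(R, M_t) \;\cong\; \varinjlim_{t\in {_{M}\sigma}} M_t \;=\; M_\sigma,
\]
by the definition of $M_\sigma$.

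Chaining the two isomorphisms yields $\Gamma([M]) \cong M_\sigma$. There is essentially no obstacle here: once Lemma \ref{th:limhom=hom} is in hand, this is just the observation that $\Hom(R,-)$ is the identity on the underlying module and that it commutes with the filtered colimit defining $M_\sigma$. The only mild bookkeeping point I would want to verify is that the transition maps in the two directed systems $\{\Hom_R(R,M_t)\}$ and $\{M_t\}$ really do match under the evaluation-at-$1$ isomorphism, which is immediate from naturality.
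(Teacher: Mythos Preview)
Your proof is correct and follows essentially the same approach as the paper: apply Lemma~\ref{th:limhom=hom} with $R$ in the first slot and then use $\Hom_R(R,-)\cong \mathrm{id}$. The paper's version is a one-line compression of yours, invoking the form $\Hom([R],[M])\cong \Hom(R,M_\sigma)$ from Lemma~\ref{th:limhom=hom} rather than the direct-limit form, but this is the same argument.
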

\begin{proof}
By the previous lemma we see that $\Gamma(M) =\Hom([R],[M]) =
\Hom(R,M_{\sigma}) = M_{\sigma}$. 
\end{proof}

We denote the map from $M_{s}$ to $M_{\sigma}$ by $\iota_{s}$. In
particular $\iota_{1}$ is the natural map from $M$ to $M_{\sigma}$. 

\begin{lemma}
\label{th:homdirlims}
Let $M$ and $N$ be finitely presented modules. Then the homomorphism
$\iota_{1}\colon M\rightarrow M_{\sigma}$ induces an isomorphism
$(\iota_{1},N_{\sigma})\colon\Hom(M_{\sigma},N_{\sigma}) \cong
\Hom(M,N_{\sigma})$.
\end{lemma}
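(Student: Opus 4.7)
The strategy is to identify both $\Hom(M_{\sigma}, N_{\sigma})$ and $\Hom(M, N_{\sigma})$ with $\Hom([M], [N])$ in a way that makes the precomposition map $(\iota_{1}, N_{\sigma})$ become the identity. Since $M_{\sigma} = \varinjlim_{s \in {_{M}\sigma}} M_{s}$ by definition of the realisation functor, we have for any $R$-module $X$ a natural isomorphism $\Hom(M_{\sigma}, X) \cong \varprojlim_{s} \Hom(M_{s}, X)$. Taking $X = N_{\sigma}$ and applying Lemma \ref{th:limhom=hom} at each term (valid because $M_{s}$, being the codomain of a morphism in $\fpmod{R}$, is finitely presented) gives $\Hom(M_{s}, N_{\sigma}) \cong \Hom([M_{s}], [N])$. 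Since $s \in \sigma$, the arrow $[s]\colon [M] \to [M_{s}]$ is invertible in $\fpmod{R}_{\sigma}$, so precomposition with $[s]$ yields a further isomorphism $\Hom([M_{s}], [N]) \cong \Hom([M], [N])$.

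The key step is to verify that these identifications convert the inverse system $\{\Hom(M_{s}, N_{\sigma})\}_{s \in {_{M}\sigma}}$ into the \emph{constant} system with value $\Hom([M], [N])$. Given $u = st$ with $t \in \sigma$, the structure map $t\colon M_{s} \to M_{u}$ induces the transition $\Hom(M_{u}, N_{\sigma}) \to \Hom(M_{s}, N_{\sigma})$ by precomposition with $t$; chasing through Lemma \ref{th:limhom=hom} this corresponds to precomposition by $[t]$ on $\Hom([M_{u}], [N])$. Composing further with $[s]$ sends $\psi$ to $\psi \circ [t] \circ [s] = \psi \circ [u]$, which is exactly the identification at level $u$. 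Hence the triangles commute and the inverse limit collapses to $\Hom([M], [N])$, so $\Hom(M_{\sigma}, N_{\sigma}) \cong \Hom([M], [N])$.

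On the other side, Lemma \ref{th:limhom=hom} directly gives $\Hom(M, N_{\sigma}) \cong \Hom([M], [N])$. Unwinding the compositions, the map $(\iota_{1}, N_{\sigma})$ is the projection of the inverse limit onto its value at $s = 1_{M}$, which under these identifications is the identity on $\Hom([M], [N])$; thus it is an isomorphism. The main hazard in executing this plan is bookkeeping the direction of composition in $\fpmod{R}_{\sigma}$, since the structure maps $t$ of the colimit system have the opposite variance from the localising isos $[s]$ that trivialise the system; once the triangle in the previous paragraph is verified, the conclusion is immediate.
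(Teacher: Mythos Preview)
Your proof is correct, but it follows a genuinely different route from the paper's. The paper argues injectivity and surjectivity of $(\iota_{1},N_{\sigma})$ by direct module-theoretic constructions: for injectivity it factors $\iota_{s}f$ through the cokernel $c_{s}$ of $s$, pushes into some $N_{t}$, and then invokes the \emph{severely right revengeful} axiom to produce $u\in\sigma$ killing $c_{s}f_{1}$; for surjectivity it lifts a given $\phi\colon M\to N_{\sigma}$ to each $M_{s}$ by forming pushouts and using closure of $\sigma$ under pushout. Your argument instead leverages Lemma~\ref{th:limhom=hom} at every index $s$ to identify the entire inverse system $\{\Hom(M_{s},N_{\sigma})\}$ with the constant system on $\Hom([M],[N])$, and then reads off the limit.

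Your approach is slicker once Lemma~\ref{th:limhom=hom} is available, and it has the pleasant side effect of proving Theorem~\ref{th:Gammabij} (that $\Gamma$ is full and faithful) in the same breath, since you exhibit $\Hom(M_{\sigma},N_{\sigma})\cong\Hom([M],[N])$ directly. It is also worth noting that your argument never appeals to the severely right revengeful axiom explicitly: it uses only that $\sigma$ is a right Ore set (for Lemma~\ref{th:limhom=hom} and for the index system ${_{M}\sigma}$ to be filtered) together with closure under pushout (for upper bounds in that system). The paper's hands-on proof, by contrast, displays exactly where each severe Ore axiom does work, which is informative given that the paper's aim is to advertise this notion. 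One small point to make fully explicit in your write-up: the limit of the constant system equals its value because the index category ${_{M}\sigma}$ is connected (it has the initial object $1_{M}$); you use this implicitly when you say the inverse limit ``collapses''.
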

\begin{proof}
Let $f \colon M_{\sigma} \rightarrow N_{\sigma}$ be a homomorphism
such that $\iota_{1}f=0$. Consider the map $\iota_{s}f\colon
M_{s}\rightarrow N_{\sigma}$. Let $c_{s}\colon M_{s}\rightarrow T_{s}$
be the cokernel of $s$. So $\iota_{s}f=c_{s}f'$ for some map $f'$ from
$T_{s}$ to $N_{\sigma}$. Then since $T_{s}$ is finitely presented, we
can choose $t$ so that $f'$ factors through $N_{t}$; thus
$f'=f_{1}\iota_{t}$. Now $c_{s}f_{1}$ is a homomorphism from $M_{s}$
to $N_{t}$ such that $sc_{s}f_{1}=0$ and since $\sigma$ is a severe
right Ore set, the cokernel of $c_{s}f_{1}=0$, $u\colon
N_{t}\rightarrow N_{tu}$ lies in $\sigma$. So $\iota_{t} =
u\iota_{tu}$. Hence
\begin{equation*}
\iota_{s}f = c_{s}f' = c_{s}f_{1}\iota_{t} = c_{s}f_{1}u\iota_{tu} = 0
\end{equation*}
and since this is true for all $s$ we deduce that $f=0$. So
$(\iota_{1},N_{\sigma})$ is injective.

Now let $\phi\colon M\rightarrow N_{\sigma}$ be a homomorphism. Then
since $M$ is finitely presented $\phi$ factors through some
$N_{t}$; that is, $\phi=f_{1}\iota_{t}$; for each $s\in
{_{M}\sigma}$, we consider the pushout diagram:
\begin{equation*}
\commsquare{M}{M_{s}}{N_{t}}{K_{s}}{s}{f_{t}}{f_{s}}{s'} 
\end{equation*}
Then $s'\in \sigma$ and so $K_{s}=N_{ts'}$ and we define
$\phi_{s}\colon M_{s}\rightarrow N_{\sigma}$ to be
$f_{s}\iota_{ts'}$. These maps fit together to give a homomorphism
from $M_{\sigma}$ to $N_{\sigma}$ which restricts to $\phi$ on
$M$ which completes the proof.
\end{proof}

This allows us to conclude that the realisation functor is full and
faithful so that $\fpmod{R}_{\sigma}$ can be thought of as a category
of modules or as we shall find useful later, the category of modules
may be thought of as a right Ore localisation of the category of
finitely presented modules over $R$. 

\begin{theorem}
\label{th:Gammabij}
The functor $\Gamma\colon \fpmod{R}_{\sigma}\rightarrow \Mod{R}$ is
full and faithful.
\end{theorem}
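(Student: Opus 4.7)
The plan is to show that for any finitely presented $R$-modules $M$ and $N$, the map
\[
\Gamma_{M,N}\colon \Hom_{\fpmod{R}_\sigma}([M],[N]) \to \Hom_R(\Gamma([M]),\Gamma([N])) = \Hom_R(M_\sigma, N_\sigma)
\]
is a bijection, where the identification on the right uses Lemma \ref{th:realis=dirlim}. The key observation is that the two previous lemmas together present both sides of this map as naturally isomorphic to $\Hom_R(M, N_\sigma)$: Lemma \ref{th:limhom=hom} furnishes the bijection $\lambda\colon \Hom_R(M,N_\sigma) \to \Hom_{\fpmod{R}_\sigma}([M],[N])$, and Lemma \ref{th:homdirlims} furnishes the bijection $(\iota_1, N_\sigma)\colon \Hom_R(M_\sigma, N_\sigma) \to \Hom_R(M, N_\sigma)$. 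So it will suffice to show that the triangle these three maps form commutes, i.e.\ that $(\iota_1,N_\sigma) \circ \Gamma_{M,N} = \lambda^{-1}$; bijectivity of $\Gamma_{M,N}$ then follows at once.

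To verify the triangle, I would pick a morphism $\alpha \in \Hom_{\fpmod{R}_\sigma}([M],[N])$ and write it by Lemma \ref{th:limhom=hom} as $\alpha = f t^{-1}$ with $f\colon M \to N_t$ and $t \in {_N\sigma}$, so that $\lambda^{-1}(\alpha) = f\iota_t \in \Hom_R(M, N_\sigma)$. I then need to check $\Gamma(\alpha)\circ \iota_1 = f\iota_t$ as maps $M \to N_\sigma$. Under the identification $\Gamma([M]) \cong M_\sigma$ of Lemma \ref{th:realis=dirlim}, an element $\iota_1(m) \in M_\sigma$ corresponds to the morphism $m\cdot 1^{-1}\colon [R] \to [M]$ in $\fpmod{R}_\sigma$. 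Post-composing with $\alpha$ and simplifying the Ore-composition
\[
[R] \xrightarrow{m} [M] \xrightarrow{f} [N_t] \xleftarrow{t} [N]
\]
gives $(fm)t^{-1}\colon [R] \to [N]$, whose image under the identification $\Gamma([N]) \cong N_\sigma$ is $\iota_t(f(m))$. Hence $\Gamma(\alpha)(\iota_1(m)) = \iota_t(f(m))$ for every $m \in M$, which is precisely $f\iota_t$ evaluated at $m$, as required.

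The real content has already been done in Lemmas \ref{th:limhom=hom} and \ref{th:homdirlims}; what remains here is essentially bookkeeping. The only point where care is needed is the naturality check above, because the identification $\Gamma([M]) \cong M_\sigma$ is itself mediated by $\lambda$, and one must confirm that $\Gamma$, defined abstractly as post-composition in the localised category, really does translate via this identification into the obvious map on direct limits. Once that is confirmed, the theorem is immediate.
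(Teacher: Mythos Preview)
Your proposal is correct and follows exactly the paper's approach: combine Lemmas \ref{th:limhom=hom} and \ref{th:homdirlims} to identify both source and target of $\Gamma_{M,N}$ with $\Hom(M,N_\sigma)$, and observe that the identifications are compatible with $\Gamma$. The paper's proof simply asserts that ``these isomorphisms are induced by the functor $\Gamma$'', whereas you have written out the naturality check explicitly; this is helpful bookkeeping but not a different argument.
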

\begin{proof}
We have shown that $\Hom(\Gamma([M]),\Gamma([N]))
= \Hom(M_{\sigma},N_{\sigma}) \cong  \Hom(M,N_{\sigma})$. However,
by lemma \ref{th:limhom=hom}, $\Hom(M,N_{\sigma}) \cong \Hom([M],[N])$
and these isomorphisms are induced by the functor $\Gamma$.  
\end{proof}

We shall return to this line of argument in section \ref{sec:ulOl}. 

\section{Epimorphisms to division rings} 
\label{sec:Cohn}
 
We break from our main thread at this point to prove P. M. Cohn's
characterisation of epimorphisms from a given ring to division
rings. Cohn's characterisation was in terms of prime matrix ideals but
we shall prove a characterisation in terms of Sylvester rank functions
on finitely presented modules. The equivalence of this with prime
matrix ideals is shown in \cite{Malc}. We refer the reader to
\cite{CohnFR} or to \cite{Malc} for the definition of a prime matrix
ideal. 

A Sylvester rank function on an additive category with cokernels
satisfies the following properties. It is a function $\rho$ from
isomorphism classes of objects in $\cat{A}$ to $\N$ which is additive
on direct sums and for every exact sequence $A\to B\to C\to 0$,
$\rho(C) \leq \rho(B) \leq \rho(A)+\rho(C)$. We note that if we have a
right exact functor $\phi$ from $\cat{A}$ to $\mmod(D) $ where $D$ is a
division ring then we have an associated Sylvester rank function on
$\cat{A}$ defined by $\rho(A) =\dim_{D}\phi(A) $. 

We extend the Sylvester rank function to a rank function on maps in
the category by defining the rank of the map $\phi\colon A\to B$ by
the formula $\rho(\phi) = \rho(B) - \rho(\cok(\phi))$. We say that
$\phi$ is $\rho$-full if and only if $\rho(\phi) =\rho(A) =
\rho(B)$. Equivalently, $\phi$ is $\rho$-full if and only if $\rho(A)
= \rho(B)$  and $\rho(\cok(\phi)) =0$.  
    
\begin{lemma}
\label{rkfngivessrO}
Let $\rho$ be a Sylvester rank function on the additive category with
cokernels $\cat{A}$. Then the set of $\rho$-full maps is a severe
right Ore set.
\end{lemma}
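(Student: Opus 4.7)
The plan is to verify the four defining conditions of a severe right Ore set in turn, with the key tool throughout being that whenever we can fit the cokernels of two composable or related maps into a three-term exact sequence, the Sylvester rank inequalities pin down the relevant ranks. Conditions (1) and (2) of the definition are nearly immediate: isomorphisms have zero-rank cokernel and preserve rank, so they are $\rho$-full; and if $s\colon A\to B$ and $t\colon B\to C$ are $\rho$-full, then $\rho(A)=\rho(B)=\rho(C)$, so for the composite $st$ it remains to check $\rho(\cok(st))=0$. For that I would establish the exact sequence
\begin{equation*}
\cok(s) \xrightarrow{\bar{t}} \cok(st) \to \cok(t) \to 0
\end{equation*}
by a standard diagram chase using that cokernel maps are epimorphisms in any additive category with cokernels: the map $\bar{t}$ is induced because $s\cdot(t\cdot c_{st})=0$, and $\cok(t)$ is the cokernel of $\bar{t}$ because any map killing $\bar{t}$ lifts along $c_s$ (an epi) to a map killing $t$. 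The Sylvester inequalities then give $\rho(\cok(st))\le \rho(\cok(s))+\rho(\cok(t))=0$.

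For condition (3), closure under pushout, let $s\colon A\to B$ be $\rho$-full, $a\colon A\to C$, and let $s'\colon C\to D$ be the pushout of $s$ along $a$. The crucial observation is that $\cok(s')\cong\cok(s)$: this follows directly from the universal property of the pushout, since collapsing $s'C$ in $D$ is the same as collapsing $sA$ in $B$. Hence $\rho(\cok(s'))=0$. Applying the Sylvester inequalities to the cokernel sequence $A\to B\oplus C\to D\to 0$ gives $\rho(D)\le\rho(B)+\rho(C)$ and $\rho(D)\ge\rho(B)+\rho(C)-\rho(A)=\rho(C)$ since $\rho(A)=\rho(B)$; applying them to $C\to D\to\cok(s')\to 0$ gives $\rho(D)\le\rho(C)+\rho(\cok(s))=\rho(C)$. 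Thus $\rho(C)=\rho(D)$, so $s'$ is $\rho$-full.

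For condition (4), suppose $s\colon A\to B$ is $\rho$-full, $a\colon B\to C$ satisfies $sa=0$, and $t\colon C\to\cok(a)$ is the cokernel. Since $t$ is a cokernel map, it is epi, so $\rho(\cok(t))=0$ automatically. What must be shown is $\rho(C)=\rho(\cok(a))$. The hypothesis $sa=0$ lets $a$ factor through the cokernel of $s$ as $\bar{a}\colon\cok(s)\to C$, and a direct check shows that $\cok(\bar{a})=\cok(a)$, giving an exact sequence
\begin{equation*}
\cok(s) \xrightarrow{\bar{a}} C \to \cok(a) \to 0.
\end{equation*}
The Sylvester inequalities then yield $\rho(\cok(a))\le\rho(C)\le\rho(\cok(s))+\rho(\cok(a))=\rho(\cok(a))$, as required.

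The main obstacle in this proof is purely formal: producing and justifying the two short exact sequences of cokernels in steps (2) and (4) inside a merely additive category with cokernels, where one cannot appeal to kernels or to the machinery of the snake lemma and must instead use only the universal property of cokernels and that cokernel maps are epi. Once those exact sequences are in hand, the Sylvester bounds close everything mechanically.
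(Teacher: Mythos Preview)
Your proof is correct and follows essentially the same route as the paper: the same exact sequence $\cok(s)\to\cok(st)\to\cok(t)\to 0$ for composition, the same sequence $\cok(s)\to C\to\cok(a)\to 0$ for the severely right revengeful condition, and the same use of the Sylvester inequalities on $A\to B\oplus C\to D\to 0$ for pushouts. The only cosmetic difference is that for pushouts you assert the stronger fact $\cok(s')\cong\cok(s)$, whereas the paper only uses that the natural map $\cok(s)\to\cok(s')$ is surjective; both are true in an additive category with cokernels and either suffices here.
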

 \begin{proof}
   Let $\Pi$ be the set of $\rho$-full maps.
 
   Suppose that $\map{\alpha}{A}{B} $ and $\map{\beta}{B}{C} $ are
   both $\rho$-full maps. Then $\rho(A) = \rho(B)= \rho(C)$ and we
   have an exact sequence $\cok(\alpha)\to \cok(\alpha\beta) \to
   \cok(\beta)\to 0$. Since $\rho(\cok(\alpha))=0=\rho(\cok(\beta))
   $, it follows that $\rho(\cok(\alpha\beta)) \leq
   \rho(\cok(\alpha))+ \rho(\cok(\beta)) = 0$ and so
   $\rho(\cok(\alpha\beta))=0$ and $\alpha\beta$ is $\rho$-full. Thus
   $\Pi$ is closed under composition.

   Suppose that $\map{\alpha}{A}{B}$  is $\rho$-full and
   $\map{\beta}{A}{C}$ is some map. We form the pushout diagram
   \begin{equation*}
     \commsquare{A}{B}{C}{D}{\alpha}{\beta}{\beta'}{\alpha'}        
   \end{equation*}
   By construction we have an exact sequence $A\to B\oplus C\to D\to
   0$ and so $\rho(B)+\rho(C) = \rho(B\oplus C) \leq \rho(A)+\rho(D) $
   and since $\rho(A) = \rho(B)$, it follows that $\rho(C) \leq
   \rho(D)$. On the other hand, the natural map from $\cok(\alpha)$ to
   $\cok(\alpha')$ is surjective so that $\rho(\cok(\alpha'))=0$ and
   so $\rho(D)\leq \rho(\cok(\alpha')) + \rho(C)= \rho(C)$. Thus
   $\alpha'$ is $\rho$-full and $\Pi$ is closed under pushouts. 

   Now assume that $\map{\alpha}{A}{B}$ is $\rho$-full and
   $\map{\beta}{B}{C}$ is a map such that $\alpha\beta=0$. Let
   $\map{\gamma}{C}{D}$ be the cokernel of $\beta$. We need to show
   that $\rho(D)= \rho(C)$ which implies that $\gamma$ is
   $\rho$-full since the cokernel of $\gamma$ is $0$. Since
   $\alpha\beta=0$, $\beta$ induces a map
   $\map{\beta'}{\cok(\alpha)}{C} $ such that $\cok(\alpha)\to C\to
   D\to 0$ is an exact sequence and since $\rho(\cok(\alpha))=0$, it
   follows that $\rho(D)= \rho(C)$ as required. 

   Thus $\Pi$ is severely right revengeful and since we have checked
   all the conditions $\Pi$  is a severe right Ore set. 
 \end{proof}

 We are now in a position to prove the main theorem of this
 section. We show that the right Ore localisation of $\fpmod{R}$ at
 the severe right Ore set constructed in the last theorem must be the
 category of finite dimensional vector spaces over a division ring
 when the rank of the ring itself is $1$.  

\begin{theorem}
  \label{th:Cohnconstrdivrg}
  Let $R$ be a ring and let $\rho$ be a Sylvester rank function on the
  category of finitely presented $R$ modules such that $\rho(R)
  =1$. Let $\Pi$ be the set of $\rho$-full maps in $\fpmod{R} $. Then
  $\fpmod{R_{\Pi}} $ is equivalent to $\mmod(D) $ for some division
  $R$-ring $D$ such that for any finitely presented module $M$,
  $[M]\cong {^{\rho(M)}}D$.
\end{theorem}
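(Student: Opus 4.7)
The plan is to take $D := \End_{\fpmod{R}_{\Pi}}([R])$ (which is an $R$-ring via the canonical map $R = \End_{R}(R) \to D$) and show (i) that $D$ is a division ring and (ii) that every $[M]$ is isomorphic to $[R]^{\rho(M)}$ in $\fpmod{R}_{\Pi}$; the equivalence with $\mmod(D)$ then falls out of the realisation functor $\Hom_{\fpmod{R}_{\Pi}}([R],-)$, which is already known to be fully faithful by Theorem \ref{th:Gammabij}. For (i), take a nonzero $\alpha \in D$ and write $\alpha = fs^{-1}$ with $\map{s}{R}{N}$ in $\Pi$ and $\map{f}{R}{N}$. Since $s$ is $\rho$-full, $\rho(N) = \rho(R) = 1$. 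Let $\map{c}{N}{C}$ be the cokernel of $f$ in $\fpmod{R}$; the exact sequence $R \to N \to C \to 0$ together with $\rho(R) = \rho(N) = 1$ forces $\rho(C) \in \{0,1\}$. If $\rho(C) = 1$ then $c$ is itself $\rho$-full (rank-one domain and codomain, trivial cokernel) so $c \in \Pi$, but $fc = 0$ while $sc \in \Pi$, giving $\alpha = (fc)(sc)^{-1} = 0$, contradicting $\alpha \neq 0$. Hence $\rho(C) = 0$, so $f$ is $\rho$-full, $f \in \Pi$, and $\alpha$ is invertible in $\fpmod{R}_{\Pi}$.

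For (ii), first I would prove every $[M]$ is isomorphic to $[R]^{k}$ for some $k$. Choose a finite free presentation $R^m \xrightarrow{\alpha} R^n \to M \to 0$; Theorem \ref{th:Olocrex} yields an exact sequence $[R]^m \xrightarrow{[\alpha]} [R]^n \to [M] \to 0$ in $\fpmod{R}_{\Pi}$. The morphism $[\alpha]$ is represented by an $n \times m$ matrix over $D$, and since $D$ is a division ring Gaussian elimination produces invertible matrices $P$ and $Q$, i.e.\ automorphisms of $[R]^n$ and $[R]^m$ in $\fpmod{R}_{\Pi}$, such that $P\,[\alpha]\,Q^{-1}$ is diagonal with exactly $r$ ones (the nonzero entries being invertible in $D$, normalised to $1$) and the rest zero. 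Taking cokernels then gives $[M] \cong [R]^{n-r}$.

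Next I would identify $k = n - r$ with $\rho(M)$ by showing $\rho$ descends to a Sylvester rank function on $\fpmod{R}_{\Pi}$. The crucial input is $[X] \cong 0 \iff \rho(X) = 0$: if $\rho(X) = 0$ the zero map $X \to 0$ is $\rho$-full and so lies in $\Pi$, giving $[X] \cong [0] = 0$; conversely, $[X] = 0$ forces some $u \in \Pi$ with $u = 0$, and a zero map can be $\rho$-full only if $\rho(X) = 0$. Given this, any isomorphism $[M] \cong [N]$ in the localisation forces $\rho(M) = \rho(N)$, by applying the exact-sequence inequality for $\rho$ to a representing map $\map{\phi}{M}{L}$ (whose cokernel is trivialised in the localisation and so has $\rho = 0$) and symmetrically to a representative of the inverse (whose source has the same $\rho$ as $M$ by $\rho$-fullness). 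Additivity of $\rho$ together with $\rho(R) = 1$ then yields $n - r = \rho([R]^{n-r}) = \rho(M)$. The functor $V \mapsto V \otimes_{D} [R]$ from $\mmod(D)$ to $\fpmod{R}_{\Pi}$ is fully faithful on the subcategory of free $D$-modules (because $\Hom_{\fpmod{R}_{\Pi}}([R]^a,[R]^b)$ is the matrix space $M_{b,a}(D)$) and essentially surjective by the previous step, giving the desired equivalence with $[M]$ corresponding to $D^{\rho(M)}$. I expect the main obstacle to be the descent of $\rho$ in step three: showing it is well-defined on isomorphism classes in the localisation and that its value there really does match the dimension count produced by Smith normal form.
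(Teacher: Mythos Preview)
Your argument is correct, and step (i) matches the paper's almost verbatim. The main difference is in step (ii): the paper works entirely upstairs in $\fpmod{R}$, proving by induction on $m=\rho(M)$ that there exists a $\rho$-full map $^{m}R\to M$ (the inductive step finds some $\phi\colon R\to M$ of rank $1$, passes to $\cok\phi$, and lifts). This immediately gives $[M]\cong[{}^{\rho(M)}R]$ with the correct exponent built in, so no separate descent argument is needed. You instead work downstairs in $\fpmod{R}_{\Pi}$, using that $D$ is a division ring to put the presentation matrix $[\alpha]$ into Smith normal form; this is clean and conceptual, but it only gives $[M]\cong[R]^{k}$ for \emph{some} $k$, forcing the extra step of showing that $\rho$ is constant on isomorphism classes in the localisation. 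Your descent argument is sound (the key point $[\cok\phi]=0\Rightarrow\rho(\cok\phi)=0$ goes through exactly as you say), so the trade-off is: the paper's induction is more combinatorial but self-contained, while your linear-algebra route is slicker at the cost of a second pass to recover the rank. One small quibble: Theorem~\ref{th:Gammabij} concerns the realisation functor into $\Mod{R}$, not into $\mmod(D)$, so it is not quite the right citation for full faithfulness; but you do not actually need it, since your final paragraph already gives the correct direct argument via $\Hom_{\fpmod{R}_{\Pi}}([R]^{a},[R]^{b})\cong M_{b,a}(D)$.
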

\begin{proof}
  First of all, we show that $\End_{\fpmod{R}_{\Pi}}([R]) $ is a
  division ring $D$. Any map from $[R]$ to itself takes the form of
  $\alpha\beta^{-1}$ for maps $\map{\alpha}{R}{M} $ and
  $\map{\beta}{R}{M}$ where $\beta$ is $\rho$-full. It follows that
  $\rho(M)=1 $. So $\rho(\alpha) = 0$ or $1$. If $\rho(\alpha)=1$ then
  $\alpha$ is invertible and so is $\alpha\beta^{-1}$. If
  $\rho(\alpha)=0$ then $\rho(\cok(\alpha))=1$ and the cokernel map is
  $\rho$-full. It follows that $\alpha=0$ in $\fpmod{R}_{\Pi}$ and so
  $\alpha\beta^{-1}=0$ in $\fpmod{R}_{\Pi}$. Thus every endomorphism
  of $[R]$ in $\fpmod{R}_{\Pi}$ is either $0$ or invertible. So
  $\End_{\fpmod{R}_{\Pi}}([R])$ is a division ring $D$ and the natural
  ring homomorphism from $\End_{R}(R) $ to
  $\End_{\fpmod{R}_{\Pi}}([R])$ makes $D$ an $R$-ring. 

  Next we show that if $\rho(M)=m$ then there is a $\rho$-full map
  $\map{\alpha}{{^{m}R}}{M}$ by induction on $m$. If $m=0$, then the
  map from $0$ to $M$ is $\rho$-full. Otherwise, suppose that $m>0$. 
  
  To prove this we need to show that if $\map{\alpha}{R}{M}$ is a map
  such that $\rho(\alpha)=0$ and $\map{\beta}{L}{M}$ is some map then
  $\rho(\beta\oplus \alpha) = \rho(\beta) $. This follows because the
  commutative square
  \begin{equation*}
    \commsquare{M}{\cok(\alpha)}{\cok(\beta)}{\cok(\beta \oplus
      \alpha)}{}{}{}{}
  \end{equation*}
  is a pushout diagram and the map from $M$ to $\cok(\alpha)$ is
  $\rho$-full since it is surjective and $\rho(\alpha)=0$ implies that
  $\rho(M) = \rho(\cok(\alpha))$. It follows that that the map from
  $\cok(\beta)$ to $\cok(\beta \oplus \alpha) $  is $\rho$-full and in
  particular $\rho(\cok(\beta \oplus \alpha)) = \rho(\cok(\beta))$. We
  deduce that $\rho(\beta\oplus \alpha) = \rho(\beta)$. 

  Now suppose that $M$ is generated by the elements $\{m_1,\dots,
  m_t\}$. Let $\map{\phi_i}{R}{M}$ be the map $\phi(r) =m_ir$. If
  $\rho(\phi_i) =0$  for each $i$ then induction and the preceding
  paragraph shows that the surjective map
  $\map{\oplus_{i=1}^{t}\phi_i}{^{t}R}{M}$ has $\rho$-rank $0$ and
  consequently $\rho(M)=0$. The assumption that $\rho(M)>0$ implies
  that we may find some map $\map{\phi}{R}{M}$ such that
  $\rho(\phi)>0$ and hence $\rho(\phi)=1$. Let $M'=\cok(\phi) $. Then
  $\rho(M')=m-1$ and so by induction there exists a $\rho$-full map
  $\map{\mu}{^{m-1}R}{M'}$. We choose some lifting
  $\map{\mu'}{^{m-1}R}{M}$ such that the induced map to $M'$ is
  $\mu$. We consider the map $\map{\phi\oplus\mu'}{^{m}R}{M}$ whose
  cokernel is by construction $\cok(\mu) $. Since $\mu$ is
  $\rho$-full, $\rho(\cok(\phi\oplus\mu')) = \rho(\cok(\mu)) =0$ and
  thus $\phi \oplus \mu$ is $\rho$-full.

  Thus we have shown that for every finitely presented module $M$
  there exists a $\rho$-full map from ${^{\rho(M)}}R$  to $M$. Thus
  every object of $\fpmod{R}_{\Pi}$  is isomorphic to ${^{m}}[R]$ for
  some integer $m$. We also know that $\End_{\fpmod{R}_{\Pi}}([R]) =
  D$ a division ring and hence $\Hom_{\fpmod{R}_{\Pi}}([R], )$ defines
  an equivalence of categories between $\fpmod{R}_{\Pi}$ and $\mmod(D)
  $ as we set out to prove.

  Moreover the localisation functor is right exact and therefore the
  functor we now have via composition from $\fpmod{R}$ to $\mmod(D) $
  is right exact. By lemma \ref{lem:charrex}, it follows that it takes
  the form $\tensor_{R}D$ and so for every finitely presented module
  $M$, $\dim_D(M \tensor_{R}D) = \rho(M)$.
\end{proof}

Thus we have the proved the hard direction of Cohn's characterisation
of epimorphisms from a ring $R$ to division rings. For completeness we
state this theorem

\begin{theorem}
\label{Cohnepitodr}
Let $R$ be a ring. Then the epimorphisms from $R$ to division rings
are parametrised by the Sylvester rank functions $\rho$ on $\fpmod{R} $
such that $\rho(R)=1 $.

The parametrisation is given by associating to such an epimorphism
from $R$ to $D$ the Sylvester rank function given by $\rho(M)
=\dim_{D}M\tensor D$.

The inverse map from Sylvester rank functions such that $\rho(R)=1 $
to epimorphisms to a division ring is constructed as follows.
To a Sylvester rank function $\rho$, we associate the ring
homomorphism $\map{\phi}{R}{D}$ where $D$ is the endomorphism ring of
$[R]$ in $\fpmod{R}_{\sigma}$ where $\sigma$ is the severe right Ore
set of $\rho$-full maps be finitely presented modules over $R$. 
\end{theorem}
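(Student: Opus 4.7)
The plan is to establish: (a) the forward map sends each epimorphism $\phi\colon R\to D$ to a Sylvester rank function with $\rho(R)=1$; (b) the backward map produces a genuine ring epimorphism; (c) the two maps are mutually inverse. The substantive content has already been dispatched in Theorem \ref{th:Cohnconstrdivrg}.

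For (a), I take $\rho(M)=\dim_D(M\otimes_R D)$: additivity on direct sums is immediate, the Sylvester inequalities for an exact sequence $A\to B\to C\to 0$ follow from the right-exactness of $\otimes_R D$ and the standard dimension function on $\mmod(D)$, and $\rho(R)=1$. The round trip \emph{backward then forward} is immediate from the final sentence of Theorem \ref{th:Cohnconstrdivrg}. For (b) the constructed ring is $D=\End_{\fpmod{R}_\sigma}([R])$, whose underlying right $R$-module is $R_\sigma$ by Lemma \ref{th:limhom=hom} (via evaluation at $1$) and whose ring structure corresponds under Theorem \ref{th:Gammabij} to $\End_R(R_\sigma)$. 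To show that $\phi\colon R\to D$ is a ring epimorphism I would verify that multiplication $D\otimes_R D\to D$ is an isomorphism: surjectivity is clear, and injectivity follows from the direct-limit description $D\otimes_R D=\varinjlim_{s,t} R_s\otimes_R R_t$ combined with the closure of $\sigma$ under common right multiples and pushouts (Section \ref{sec:sevrO}), which lets one rewrite each tensor so that it vanishes after passing to a later stage of the limit.

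For (c), the remaining direction is \emph{forward then backward}. Starting with an epimorphism $\phi_0\colon R\to D_0$, form $\rho$, and let $D$ with $\phi\colon R\to D$ be the division $R$-ring produced by the backward construction. A morphism in $\fpmod{R}$ lies in $\sigma$ if and only if it becomes an isomorphism under $\otimes_R D_0$, so this functor inverts $\sigma$; by Theorem \ref{th:Olocrex} it factors through $\fpmod{R}_\sigma\simeq\mmod(D)$ as a right-exact $\bar F\colon\mmod(D)\to\mmod(D_0)$ with $\bar F(D)=D_0$, corresponding by Lemma \ref{lem:charrex} to an $R$-ring map $\psi\colon D\to D_0$. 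As a nonzero ring map between division rings, $\psi$ is injective. For surjectivity, the image $E=\psi(D)$ is a division subring of $D_0$ containing $\phi_0(R)$, and since $\phi_0$ is an epimorphism so is the inclusion $E\hookrightarrow D_0$ (two $R$-ring maps out of $D_0$ agreeing on $E$ also agree on $R$, hence coincide). The classical fact that a proper inclusion of division rings is never a ring epimorphism then forces $E=D_0$, so $\psi$ is an isomorphism of $R$-rings.

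The two main technical points are the Ore-style injectivity verification in (b)---essentially routine unwinding of the construction of $R_\sigma$ as a direct limit---and the classical division-ring-inclusion lemma used in (c), which can simply be cited.
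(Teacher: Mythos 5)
The paper itself offers no proof of this statement: it is recorded ``for completeness'', the hard direction being Theorem \ref{th:Cohnconstrdivrg}, and all remaining verifications are left implicit. Your outline supplies exactly those verifications and its overall structure is sound: (a) is the observation already made at the start of section \ref{sec:Cohn}; the round trip ``backward then forward'' is indeed the last sentence of Theorem \ref{th:Cohnconstrdivrg}; and your (c) --- factor $\tensor_R D_0$ through $\fpmod{R}_\sigma\simeq\mmod(D)$ because every $\rho$-full map is inverted by $\tensor_R D_0$, extract an $R$-ring map $\psi\colon D\to D_0$, note it is injective, and get surjectivity by combining ``$\phi_0$ epi implies $\psi(D)\hookrightarrow D_0$ epi'' with the classical fact that a proper division subring is never epimorphically embedded --- is a legitimate way to finish, and is the kind of argument the paper tacitly assumes the reader can make.

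The one step you should tighten is the epimorphism claim in (b). As written it is not yet an argument: the multiplication of $D$ does induce maps $R_s\tensor_R R_t\to D$, but ``rewriting a tensor until it vanishes at a later stage'' of $\varinjlim_{s,t}R_s\tensor_R R_t$ is precisely the point at issue, and the Ore axioms do not obviously perform that rewriting for you. A clean argument using only what is already in the paper: by Lemma \ref{lem:charrex}, as invoked at the end of the proof of Theorem \ref{th:Cohnconstrdivrg}, the composite $\fpmod{R}\to\fpmod{R}_\sigma\simeq\mmod(D)$ is naturally $\tensor_R D$, so $\tensor_R D$ inverts every $s\in\sigma$. Writing $D=R_\sigma=\varinjlim_{s\in{_{R}\sigma}}R_s$ in the left-hand tensor factor, the canonical map $D=R\tensor_R D\to D\tensor_R D$ is $\varinjlim_{s\in{_{R}\sigma}}(s\tensor_R D)$, a direct limit of isomorphisms, hence an isomorphism; since multiplication $D\tensor_R D\to D$ retracts it, multiplication is bijective and $\phi\colon R\to D$ is a ring epimorphism. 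With that replacement your proposal is complete and is, in substance, the proof the paper omits.
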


\section{Universal localisation via right Ore localisation} 
\label{sec:ulOl} 

At this point we return to our main study. In this section we begin by
showing that the category of finitely presented modules over the
universal localisation of $R$ at a set of maps between finitely
generated projective modules over $R$ may be obtained as the right Ore
localisation of $\fpmod{R} $ at the severe right Ore set $\sigma$
generated by $\Sigma$ in $\fpmod{R} $. We then use this to investigate
the $R$ module structure of modules in $\fpmod{R_{\Sigma}}$ by a
closer examination of the maps in $\sigma$.

Let $\Sigma$ be a set of maps between finitely generated projective
modules over the ring $R$ and let $\sigma$ be the severe right Ore set
generated by $\Sigma$. Then consider the functor
$\_\otimes_{R}R_{\Sigma} \colon \fpmod{R}\rightarrow
\fpmod{R_{\Sigma}}$. Since it inverts all elements of $\Sigma$ it must
invert $\sigma$ as well and so we obtain a new functor
$\Lambda_{\Sigma}\colon \fpmod{R}_{\sigma}\rightarrow
\fpmod{R_{\Sigma}}$. 
We shall use the notation $\downarrow^{\Sigma}$
for the restriction functor from $\Mod{R_{\Sigma}}$ to $\Mod{R}$.

\begin{theorem}
\label{th:rOlfp=fpul}
Let $\Sigma$ be a set of maps between finitely generated projective
modules over the ring $R$ and let $\sigma$ be the severe right Ore set
generated by $\Sigma$. Then the functor $\Lambda_{\Sigma}\colon
\fpmod{R}_{\sigma}\rightarrow \fpmod{R_{\Sigma}}$ is an equivalence
of categories. 
\end{theorem}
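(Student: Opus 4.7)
The plan is to split the equivalence into two pieces: full faithfulness, obtained by identifying the composite $\downarrow^{\Sigma}\circ\Lambda_{\Sigma}$ with the realisation functor $\Gamma$ of Section~\ref{sec:sevrO}, and essential surjectivity, obtained by lifting presentations.

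Since $\_\otimes_{R}R_{\Sigma}\colon\fpmod{R}\to\fpmod{R_{\Sigma}}$ is right exact and inverts $\Sigma$, Lemma~\ref{th:lsroc} guarantees that it inverts every element of $\sigma$, and so factors uniquely through $\fpmod{R}_{\sigma}$ as $\Lambda_{\Sigma}\circ[\,\cdot\,]$. The key step is to prove, for every $M\in\fpmod{R}$, a natural $R$-module isomorphism $M_{\sigma}\cong M\otimes_{R}R_{\Sigma}$, which by Lemma~\ref{th:realis=dirlim} amounts to the identification $\Gamma\cong{\downarrow^{\Sigma}}\circ\Lambda_{\Sigma}$. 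In one direction, the maps $M_{s}\to M_{s}\otimes_{R}R_{\Sigma}\xrightarrow{(s\otimes 1)^{-1}}M\otimes_{R}R_{\Sigma}$ are compatible with the directed system defining $M_{\sigma}$ and assemble into an $R$-linear map $M_{\sigma}\to M\otimes_{R}R_{\Sigma}$. In the other direction, I would show that $M_{\sigma}$ carries a canonical $R_{\Sigma}$-module structure: every $s\in\Sigma$ has invertible image in $\fpmod{R}_{\sigma}$, so the $R$-action on $M_{\sigma}$ extends through $R_{\Sigma}$ by the universal property of the universal localisation, and then $\iota_{1}\colon M\to M_{\sigma}$ extends uniquely to an $R_{\Sigma}$-linear map $M\otimes_{R}R_{\Sigma}\to M_{\sigma}$. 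Both maps restrict to $\iota_{1}$ on $M$ and so are mutually inverse. Since $R\to R_{\Sigma}$ is a ring epimorphism, the restriction $\downarrow^{\Sigma}$ is full and faithful, and by Theorem~\ref{th:Gammabij} so is $\Gamma$; hence $\Lambda_{\Sigma}$ is full and faithful.

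For essential surjectivity, $\Lambda_{\Sigma}([R^{n}])=R_{\Sigma}^{n}$ by construction, so every finitely generated free $R_{\Sigma}$-module lies in the essential image. A general $N\in\fpmod{R_{\Sigma}}$ fits into a presentation $R_{\Sigma}^{m}\xrightarrow{\alpha}R_{\Sigma}^{n}\to N\to 0$; by fullness just established, $\alpha$ lifts to a morphism $\tilde\alpha\colon[R^{m}]\to[R^{n}]$ in $\fpmod{R}_{\sigma}$, whose cokernel exists by Theorem~\ref{th:Olocrex}, and the right exact functor $\Lambda_{\Sigma}$ carries this cokernel onto $N$.

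The main obstacle I anticipate is installing the $R_{\Sigma}$-module structure on $M_{\sigma}$: the universal property of $R_{\Sigma}$ is phrased in terms of honest matrices representing the maps in $\Sigma$ becoming invertible, whereas what is directly available in $\fpmod{R}_{\sigma}$ is the abstract invertibility of $[s]$ between images of finitely generated projective modules. Making this passage precise, and checking compatibility with the directed system defining $M_{\sigma}$, is the technical heart of the argument.
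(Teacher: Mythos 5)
Your overall architecture is the paper's own: identify $\downarrow^{\Sigma}\circ\Lambda_{\Sigma}$ with the realisation functor $\Gamma$ by proving $M_{\sigma}\cong M\otimes_{R}R_{\Sigma}$, deduce full faithfulness from Theorem~\ref{th:Gammabij} and faithfulness of restriction along the epimorphism $R\to R_{\Sigma}$, and get essential surjectivity by lifting presentations. But the step you yourself flag as the ``technical heart'' --- installing the $R_{\Sigma}$-module structure on $M_{\sigma}$ --- is left unproved, and the mechanism you propose (the universal property of $R\to R_{\Sigma}$) is not the right tool as stated: that property concerns ring homomorphisms under which the matrices in $\Sigma$ become invertible, and there is no ring here to feed it; invertibility of $\Sigma$ on $M_{\sigma}$ is a statement about induced maps of Hom groups, not about the image of $R$ in some endomorphism ring (the maps in $\Sigma$ need not even be square). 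The missing idea is Lemma~\ref{th:limhom=hom}: for a finitely generated projective $P$ one has $\Hom_{R}(P,M_{\sigma})\cong\Hom_{\fpmod{R}_{\sigma}}([P],[M])$ naturally, so for $\alpha\colon P\to Q$ in $\Sigma$ the map $(\alpha,M_{\sigma})\colon\Hom(Q,M_{\sigma})\to\Hom(P,M_{\sigma})$ is identified with $(\alpha,[M])$, which is bijective because $\alpha$ becomes invertible in $\fpmod{R}_{\sigma}$; by the standard characterisation of modules over a universal localisation this gives $M_{\sigma}$ its (unique) $R_{\Sigma}$-structure. This is exactly how the paper closes the gap, and it also lets one bypass your two-explicit-maps argument: every map in the directed system is inverted by $\otimes_{R}R_{\Sigma}$ (Lemma~\ref{th:lsroc}), so $M_{\sigma}\otimes_{R}R_{\Sigma}\cong M\otimes_{R}R_{\Sigma}$, while $M_{\sigma}\otimes_{R}R_{\Sigma}\cong M_{\sigma}$ because $M_{\sigma}$ is an $R_{\Sigma}$-module and $R\to R_{\Sigma}$ is an epimorphism.

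Two smaller assertions also need justification. First, your claim that the two maps are mutually inverse ``because both restrict to $\iota_{1}$'': on the $M\otimes_{R}R_{\Sigma}$ side this works since that module is generated over $R_{\Sigma}$ by the image of $M$ and the maps are $R_{\Sigma}$-linear, but on the $M_{\sigma}$ side you need that an endomorphism of $M_{\sigma}$ is determined by its restriction to $M$, i.e.\ the injectivity half of Lemma~\ref{th:homdirlims}, which you do not invoke. Second, the right exactness of $\Lambda_{\Sigma}$ used in your essential-surjectivity step deserves a word: it follows either from the explicit form of cokernels in $\fpmod{R}_{\sigma}$ in the proof of Theorem~\ref{th:Olocrex} together with right exactness of $\otimes_{R}R_{\Sigma}$, or, as the paper argues, from the fact that $[R]$ is a projective object of $\fpmod{R}_{\sigma}$ so that $\Gamma=\Hom([R],\ )$ is right exact and the image of $\Lambda_{\Sigma}$ is closed under cokernels. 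With those points repaired, your proof is essentially the paper's.
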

\begin{proof}
We begin by showing that $\Lambda_{\Sigma}{\downarrow}^{\Sigma}$ is
isomorphic to the realisation functor $\Gamma$. We need to show that
$M_{\sigma}\cong M\otimes_{R}R_{\Sigma}$. For each map $u$ in the
directed system whose limit is $M_{\sigma}$, $u\otimes_{R}R_{\Sigma}$
is an isomorphism and hence $M_{\sigma}\otimes_{R}R_{\Sigma} \cong
M\otimes_{R}R_{\Sigma}$. However, $M_{\sigma}$ is an $R_{\Sigma}$
module, because if $\alpha \colon P \rightarrow Q$ lies in $\Sigma$,
then $(\alpha,M_{\sigma})\colon \Hom(Q,M_{\sigma}) \rightarrow
\Hom(P,M_{\sigma})$ is the map $(\alpha,[M])\colon \Hom([Q],[M])
\rightarrow \Hom([P],[M])$ in $\fpmod{R}_{\sigma}$ after applying
the isomorphism of lemma \ref{th:limhom=hom}. But $\alpha$ is an
isomorphism in $\fpmod{R}_{\sigma}$ and so $(\alpha,[M])$ is an
isomorphism. Thus $M_{\sigma}$ is an $R_{\Sigma}$ module and since the
homomorphism from $R$ to $R_{\Sigma}$ is an epimorphism,
$M_{\sigma}\otimes_{R}R_{\Sigma} \cong M_{\sigma}$. At this stage, we
see that $\Lambda_{\Sigma}$ is full and faithful and its image lies in
$\fpmod{R_{\Sigma}}$; to be an equivalence we need that every
finitely presented module over $R_{\Sigma}$ is isomorphic to some
$\Lambda_{\Sigma}([M]) = M_{\sigma}$. However, $R_{\sigma}$ is
$R_{\Sigma}$ which is a projective object in the image of
$\downarrow^{\Sigma}$.  Since $\Gamma$ is full and faithful, $[R]$ is
a projective object of $\fpmod{R}_{\sigma}$, that is,
$\Gamma=\Hom([R],\ )$ is right exact. Since $\fpmod{R}_{\sigma}$ is
an additive category with cokernels, the image of $\Gamma$ and hence
$\Lambda_{\Sigma}$ is closed under cokernels and therefore every
finitely presented module over $R_{\Sigma}$ lies in the image of
$\Lambda_{\Sigma}$ as required.
\end{proof}

Whilst the proof of this theorem is still fresh in the mind of the
reader we note that we also proved the following corollary.

\begin{theorem}
\label{th:ulocind=dirlim}
Let $\Sigma$ be a set of maps between finitely generated projective
modules over $R$. Then for any finitely presented module $M$ over $R$,
$M\otimes_R{R_{\Sigma}} \cong M_{\sigma} = \varinjlim_{s\in
{_{M}\sigma}}M_{s}$ where ${_{M}\sigma}$ is the set of maps in the
severe right Ore set generated by $\Sigma$ that begin at $M$.
\end{theorem}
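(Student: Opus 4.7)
The plan is to extract the isomorphism $M \otimes_R R_\Sigma \cong M_\sigma$ directly from the argument used in Theorem \ref{th:rOlfp=fpul}, since the key computation there was precisely the identification of the realisation $\Gamma([M])$ with the induced module. By Lemma \ref{th:realis=dirlim} we already know $\Gamma([M]) \cong M_\sigma = \varinjlim_{s \in {_M\sigma}} M_s$, so what remains is to show this direct limit coincides with $M \otimes_R R_\Sigma$ as an $R$-module.

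First I would observe that every structural map in the directed system defining $M_\sigma$ lies in $\sigma$, and hence becomes an isomorphism after applying $\_ \otimes_R R_\Sigma$. Because tensor product commutes with direct limits, this yields $M_\sigma \otimes_R R_\Sigma \cong M \otimes_R R_\Sigma$. The second step is to show that $M_\sigma$ is itself naturally an $R_\Sigma$-module. For each $\alpha \colon P \to Q$ in $\Sigma$ between finitely generated projectives, the induced map $(\alpha, M_\sigma) \colon \Hom(Q, M_\sigma) \to \Hom(P, M_\sigma)$ corresponds, via Lemma \ref{th:limhom=hom}, to $(\alpha, [M]) \colon \Hom([Q],[M]) \to \Hom([P],[M])$ in $\fpmod{R}_\sigma$, which is an isomorphism because $\alpha \in \sigma$ is invertible there. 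Thus every map in $\Sigma$ acts invertibly on $M_\sigma$, so the $R$-action factors through $R_\Sigma$ by the universal property of universal localisation.

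Finally, since $R \to R_\Sigma$ is an epimorphism of rings, any $R_\Sigma$-module $N$ satisfies $N \otimes_R R_\Sigma \cong N$ canonically. Applying this to $N = M_\sigma$ gives $M_\sigma \otimes_R R_\Sigma \cong M_\sigma$, and combining with the first step yields $M \otimes_R R_\Sigma \cong M_\sigma$ as required. The only subtle step is the verification that $M_\sigma$ carries a well-defined $R_\Sigma$-action; this is where Lemma \ref{th:limhom=hom} does the genuine work, by transporting the invertibility of $\alpha$ in $\fpmod{R}_\sigma$ across to the endomorphism side. Everything else is formal manipulation of direct limits and the defining property of ring epimorphisms.
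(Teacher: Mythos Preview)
Your proposal is correct and follows essentially the same route as the paper: the paper does not give a separate proof of this theorem but simply points out that it was established inside the proof of Theorem~\ref{th:rOlfp=fpul}, and the argument there is exactly the three-step one you outline (tensor commutes with the direct limit so $M_\sigma\otimes_R R_\Sigma\cong M\otimes_R R_\Sigma$; Lemma~\ref{th:limhom=hom} shows $M_\sigma$ is an $R_\Sigma$-module; the epimorphism property gives $M_\sigma\otimes_R R_\Sigma\cong M_\sigma$). Your identification of Lemma~\ref{th:limhom=hom} as the one non-formal ingredient matches the paper's emphasis precisely.
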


For this to be useful we need to understand the maps in $\sigma$ and
the next theorem gives us such a description. First we introduce some
relevant ideas. 

Let $\Sigma$ be a set of maps between finitely generated projective
modules over $R$; then when we invert $\Sigma$ we also invert other
maps between finitely generated projective modules. Thus if we invert
$\alpha$ and $\beta$, we also invert $\bigl(\begin{smallmatrix}
\alpha & h\\
0 & \beta
\end{smallmatrix}
\bigr)$ for any map $h$ between the correct projective modules. We say
that a set of maps $\Theta$ between finitely generated projective
modules is \define{upper triangularly closed} if for all
$\alpha,\beta\in\Theta$ and $h$ such that $\bigl(\begin{smallmatrix}
\alpha & h\\ 0 & \beta
\end{smallmatrix}
\bigr)$ is a map between finitely generated projective modules, then
this map lies in $\Theta$. The \define{upper triangular closure}
of $\Sigma$, $\overline{\Sigma}$, is the smallest upper triangularly closed
set of maps containing $\Sigma$. Exactly the same considerations apply
to define \define{lower triangularly closed} sets of maps and the
\define{lower triangular closure} $\underline{\Sigma}$ of a set
of maps $\Sigma$. Finally, we say that a set of maps is
\define{triangularly closed} if it is both upper and lower
triangularly closed and the \define{triangular closure} of a set
of maps $\Sigma$ is the smallest triangularly closed set of maps
$\tilde{\Sigma}$ containing it. It is clear that all maps in the
triangular closure of $\Sigma$ are inverted when we invert $\Sigma$
and so $R_{\Sigma}\cong R_{\overline{\Sigma}} \cong
R_{\underline{\Sigma}} \cong R_{\tilde{\Sigma}}$. It is a simple
matter to check that $\tilde{\Sigma}\subset
\sigma$ where $\sigma$ is the severe right Ore set generated by
$\Sigma$. It follows that the severe right Ore set generated by
$\tilde{\Sigma}$ or by either of $\underline{\Sigma}$ or
$\overline{\Sigma}$ is just $\sigma$. 

In the case where all the maps in $\Sigma$ are injective then so are
the maps in the triangular closure of $\Sigma$ and the maps in the
lower triangular closure of $\Sigma$ give presentations of all modules
in the extension closure of the modules $S_{\Sigma}$ as do the maps in
the upper triangular closure. 

We say that $t$ is a \define{good pushout} if there exists a map
$\tau$ in the lower triangular closure of $\Sigma$ such that $t$ is a
pushout of $\tau$. We say that $u$ is a \define{good surjection}
if there exists a diagram
\begin{equation*}
\xymatrix{
P \ar[r]^-{\tau}  & P' \ar[r]^-{a}  & N \ar[r]^-{u}  & N'  }
\end{equation*}
where $\tau$ is in the lower triangular closure of $\Sigma$, $\tau a
=0$ and $u$ is the cokernel of $a$. Of course, the conditions imply
that a good pushout or surjection lies in $\sigma$.  

In the case where all elements of $\Sigma$ are injective we shall see
later (or the reader can quickly check) that the set of good pushouts
is precisely the set of injective maps between finitely presented modules
whose cokernel lies in the extension closure of $S_{\Sigma}$.

\begin{theorem}
\label{th:sigmaisgpushgoodsurj}
Let $R$ be a ring and let $\Sigma$ be a set of maps between finitely
generated projective modules over $R$. Let $\sigma$ be the severe
right Ore set generated by $\Sigma$. Then if $s\in \sigma$ there
exists a good pushout $t$ and a good surjection $u$ such that $s=tu$.  
\end{theorem}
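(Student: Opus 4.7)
I would show that the set $\sigma' := \{tu : t \text{ a good pushout},\ u \text{ a good surjection}\}$ is itself a severe right Ore set containing $\Sigma$; then $\sigma \subseteq \sigma'$ follows by minimality of $\sigma$, while $\sigma' \subseteq \sigma$ is immediate. Each $\alpha \in \Sigma$ is the pushout of itself along the identity, so $\alpha = \alpha \cdot 1 \in \sigma'$. Isomorphisms $f\colon M \to N$ are handled concretely: for any $\tau\colon P\to P' \in \underline{\Sigma}$, the inclusion $t\colon M \hookrightarrow M \oplus \cok(\tau)$ is a good pushout (of $\tau$ along $0\colon P\to M$), and the map $u\colon M\oplus\cok(\tau)\to N$, $(m,c)\mapsto f(m)$, is the cokernel of $a\colon P'\to M\oplus\cok(\tau)$, $p'\mapsto(0,\pi p')$ (where $\pi\colon P'\to\cok(\tau)$ is the canonical projection), which is killed by $\tau$; composing gives $f = tu \in \sigma'$.

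\textbf{The easier closures.} Suppose $s = tu \in \sigma'$. For pushout along $c\colon A \to C$: do the pushout in two stages — pushout $t$ along $c$ to get $t'$ (still a pushout of the same $\tau$, hence still a good pushout), then pushout $u$ along the induced $c'\colon X \to X'$. Writing $u = \cok(a_1)$ with $\tau_1 a_1 = 0$, the pushout of a cokernel is a cokernel: $u' = \cok(a_1 c')$, and the same $\tau_1$ kills $a_1 c'$, so $u'$ is a good surjection. For closure under cokernels of right killers: if $sa = 0$, write $t$ as the pushout of $\tau\colon P\to P'$ along $\alpha\colon P\to A$, with companion map $\tau_*\colon P'\to X$ satisfying $\tau\tau_* + \alpha t = 0$, and set $a' = \tau_* ua\colon P'\to C$. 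Then $\tau a' = -\alpha tua = 0$; surjectivity of $u$ and the factorisation of $ua$ through $X/t(A)\cong\cok(\tau)$ give $\im(a') = \im(a)$, so $\cok(a) = \cok(a')$ is a good surjection.

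\textbf{Composition closure — the main obstacle.} To decompose $s_1 s_2 = t_1(u_1 t_2)u_2$, the crucial step is to swap the good surjection $u_1\colon X \to B$ past the subsequent good pushout $t_2$. If $t_2$ is the pushout of $\tau\colon P\to P'$ along $\alpha\colon P\to B$, then projectivity of $P$ and surjectivity of $u_1$ let me lift $\alpha$ to $\tilde\alpha\colon P\to X$; let $\tilde t_2\colon X\to\tilde Y$ be the pushout of $\tau$ along $\tilde\alpha$, which is a good pushout. By the standard composition-of-pushouts identity, $Y$ is then the pushout of $\tilde t_2$ along $u_1$, producing $\psi\colon\tilde Y\to Y$ with $\tilde t_2\psi = u_1 t_2$, and $\psi$ is a good surjection by the pushout-closure argument. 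Thus $s_1 s_2 = (t_1\tilde t_2)(\psi u_2)$. The remaining task is to show that the composition of two good pushouts is a good pushout, and similarly for two good surjections. For pushouts: lift the attaching map $\tilde\alpha_b\colon P_b\to X$ through the (surjective) quotient $A\oplus P_a'\to X$ using projectivity of $P_b$, obtaining components $(\alpha_b^1,\alpha_b^2)$, and identify $t_1\tilde t_2$ with the pushout of the lower-triangular block matrix $\bigl(\begin{smallmatrix}\tau_a & 0\\ \pm\alpha_b^2 & \tau_b\end{smallmatrix}\bigr) \in \underline{\Sigma}$ along $(\alpha_a,\alpha_b^1)$. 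An analogous block-matrix combination of the two killer data sets (again using projectivity to lift the second killer map through the first cokernel) handles the composition of two good surjections. The main obstacle throughout is this bookkeeping: performing the lifts, matching signs, and verifying that the combined data really witnesses the composite as a single pushout (or cokernel) of a map in $\underline{\Sigma}$ — which is exactly where the lower-triangular-closure hypothesis on $\underline{\Sigma}$ is essential.
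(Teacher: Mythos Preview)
Your proposal is correct and follows essentially the same route as the paper: define $\sigma'$ as the set of compositions of a good pushout followed by a good surjection, and verify that $\sigma'$ is a severe right Ore set containing $\Sigma$. The paper carries out exactly the same programme --- closure of good pushouts under composition via a lower-triangular block, closure of good surjections under pushout and composition, the swap $u_1t_2 = \tilde t_2\psi$ obtained by lifting the attaching map through the surjection $u_1$, and finally the severely-right-revengeful check. One small difference worth noting: in that last step the paper assumes the companion map $b\colon Q\to B$ is surjective (so that $\cok(d)=\cok(btd)$ directly) and then patches the general case by enlarging the pushout square with an identity block; your argument that $\im(\tau_* u a)=\im(a)$, using $X=\im(\tau_*)+t(A)$ together with $tua=0$ and surjectivity of $u$, handles both cases at once and is a little cleaner.
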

\begin{proof}
We prove that the set $\sigma'$ of maps of the form $tu$, where $t$ is
a good pushout and $u$ is a good surjection, is iself a severe right
Ore set and since it contains the generators of $\sigma$ and lies in
$\sigma$ it must be $\sigma$.

Clearly the set of good pushouts is closed under pushout. We show
firstly that it is closed under composition too. Consider the diagram
below where the left hand and right hand square are pushout diagrams
and $\alpha$ and $\beta$ are in the lower triangular closure of
$\Sigma$.  
\begin{equation*}
\xymatrix{
P \ar[r]^-{\alpha} \ar[d]_-{a}  & Q \ar[d]_<<{b}  & P_1%
\ar@{-->}[l]_-{e} \ar[r]^-{\beta} \ar@{-->}[lld]^<<<<<<{f} \ar[d]^-{c} &%
Q_1 \ar[d]^-{d} \\%
L \ar[r]_-{s}  & M \ar@<0.5ex>@{-}[r] \ar@<-0.5ex>@{-}[r]  &%
M \ar[r]_-{t}  & N  }%
\end{equation*}

So $s$ and $t$ are good pushouts. Since $\textmap{s}{b}$ is surjective
and $P_{1}$ is projective we can find maps $e,f$ such that
$c=eb+fs$. Now consider the map
\begin{equation*}
\begin{pmatrix}
a &\alpha &0\\
f &e &\beta
\end{pmatrix}\colon P\oplus P_{1}\rightarrow L\oplus Q \oplus Q_{1} 
\end{equation*}
and suppose that 
\begin{equation*}
\begin{pmatrix}
a &\alpha &0\\
f &e &\beta
\end{pmatrix}\begin{pmatrix}
x\\
y\\
z
\end{pmatrix} = 0
\end{equation*}
from which we see that $ax+\alpha y=0$ and so
$\textmap{x}{y}=\textmap{s}{b}w$ for some map $w$ since
$\textmap{s}{b}$ is the cokernel of $(a\ \alpha)$. Hence $cw +\beta z
= fsw+ebw+\beta z = 0$. Hence $\textmap{w}{z} = \textmap{t}{d}v$ for
some map $v$ since $\textmap{t}{d}$ is the cokernel of $(c\
\beta)$. Thus 
\begin{equation*}
\begin{pmatrix}
x\\
y\\
z
\end{pmatrix} =\begin{pmatrix}
st\\
bt\\
d
\end{pmatrix}v 
\end{equation*}
which shows that
\begin{equation*}
\begin{pmatrix}
st\\
bt\\
d
\end{pmatrix} \text{ is the cokernel of }\begin{pmatrix}
a &\alpha &0\\
f &e &\beta
\end{pmatrix} 
\end{equation*}
and hence $st$ is the pushout of $\textmap{\alpha\ 0}{e\ \beta}$ along
$a$ and hence $st$ is a good pushout. Thus the set of good pushouts is
closed under composition.

We show that the set of good surjections is closed under pushout.
Consider the diagram
\begin{equation*}
\xymatrix{
P \ar[r]^-{\alpha}  & Q \ar[r]^-{a}  & L \ar[r]^-{s} \ar[d]_-{b}  & %
M \ar[d]^-{b'} \\%
 &  & N \ar[r]_-{s'}  & K  }%
\end{equation*}
where $\alpha$ lies in the lower triangular closure of $\Sigma$,
$\alpha a=0$, $s$ is the cokernel of $a$ and the right hand square is
a pushout. Then $s'$ is the cokernel of $ab$ and consequently it too
is a good surjection. 

At this stage, we know that $\sigma'$ is closed under
pushouts. Consider the diagram
\begin{equation*}
\xymatrix{
K \ar[r]^-{s} \ar[d]_-{a}  & L \ar[r]^-{t} \ar[d]_-{b}  & M \ar[d]^-{c} \\
A \ar[r]_-{s'}  & B \ar[r]_-{'}  & C  }
\end{equation*}
where $s$ is a good pushout and $t$ is a good surjection and the two
squares are pushout diagrams. Then the outer rectangle is also a
pushout diagram and so the pushout of $st$ is $s't'$ where $s'$ is a
good pushout and $t'$ is a good surjection. 

Next we show that the set of good surjections is closed under
composition. Consider the diagram
\begin{equation*}
\xymatrix{
 & P_1 \ar[r]^-{\beta} \ar@{-->}[d]_-{d}  &%
Q_1 \ar@{-->}[d]_-{c} \ar[rd]^-{b} \\%
P \ar[r]_-{\alpha}  & Q \ar[r]_-{a}  & L \ar[r]_-{s}  & M \ar[d]^-{t} \\%
 &  &  & N  }%
\end{equation*}
where $\alpha,\beta$ lie in the lower triangular closure of $\Sigma$,
$\alpha a=0=\beta b$, $s$ is the cokernel of $a$ and $t$ is the
cokernel of $b$ and we need to construct and describe $c$ and
$d$. Since $s$ is surjective we pick $c$ so that $b=cs$. Then $\beta
cs=0$ and since $s$ is the cokernel of $a$ and $P_{1}$ is projective
there exists a map $d$ such that $da +\beta c =0$. Hence,
\begin{equation*}
\begin{pmatrix}
\alpha & 0\\
d & \beta
\end{pmatrix}\begin{pmatrix}
a\\
c
\end{pmatrix} = 0
\end{equation*}
and if $\textmap{a}{c}x=0$ then since $s$ is the cokernel of $a$,
$x=sy$ for some $y$ and then $by=csy=cx=0$ and so $y=tz$ and $x=stz$
which proves that $st$ is the cokernel of $\textmap{a}{c}$ and so the
set of good surjections is closed under composition.

Next we show that if $s$ is a good surjection and $t$ is a good
pushout such that $st$ exists, then there exist $s'$ and $t'$ where
$s'$ is a good surjection, $t'$ is a good pushout and $st=t's'$. Once
this is proved it is clear that $\sigma'$ is closed under
composition. 

Consider the diagram
\begin{equation*}
\xymatrix{
 & P \ar[r]^-{\alpha} \ar@{-->}[ld]^-{c} \ar[d]^-{a}  & Q \ar[d]^-{b} \\
K \ar[r]_-{s}  & M \ar[r]_-{t}  & N  }
\end{equation*}
where the square is a pushout diagram, $\alpha$ lies in the lower
triangular closure of $\Sigma$ and $s$ is a good surjection. Then we
can choose $c$ so that $a=cs$. So we form the diagram
\begin{equation*}
\xymatrix{
P \ar[r]^-{c} \ar[d]_-{\alpha}  & K \ar[r]^-{s} \ar[d]_-{t_1}  & M \ar[d]^-{t} \\
Q \ar[r]_-{d}  & L \ar[r]_-{s_1}  & N  }
\end{equation*}
where each square and the outer rectangle are pushout diagrams. But we
see that $st=-t_{1}s_{1}$ where $t_{1}$ is a good pushout and $s_{1}$
is a good surjection since it is a pushout of a good surjection. 

As stated above, this proves that $\sigma'$ is closed under
composition since if $s_{i}$ are good pushouts and $t_{i}$ are good
surjections for $i=1,2$ then $s_{1}t_{1}s_{2}t_{2} =s_{1}s't't_{2}$
where $s'$ is a good pushout and so is $s_{1}s'$ and $t'$ is a good
surjection and so is $t't_{2}$.

It remains to show that $\sigma'$ is severely right revengeful. 

Consider the diagram
\begin{equation*}
\xymatrix{
P \ar[r]^-{\alpha} \ar[d]_-{a}  & Q \ar[d]^-{b} \\
A \ar[r]_-{s}  & B \ar[r]_-{t}  & C \ar[r]_-{d}  & D \ar[r]_-{e}  & E  }
\end{equation*}
where the square is a pushout diagram, $\alpha$ lies in the lower
triangular closure of $\Sigma$, $t$ is a good surjection, $std=0$ and
$e$ is the cokernel of $d$. Assume for the moment that $b$ is
surjective. Then since $bt$ is surjective, $e$ is also the cokernel of
$btd$ and $\alpha btd=0$ so that $e$ is a good surjection and lies in
$\sigma'$. If $b$ is not surjective then replace the left hand square
by 
\begin{equation*}
\xymatrix{
P\oplus P' \ar[r]^-{\left(\begin{smallmatrix}
\alpha & 0\\
0 & I
\end{smallmatrix}\right)} \ar[d]_-{\left(\begin{smallmatrix}
a\\
p
\end{smallmatrix}\right)}  & Q\oplus P' \ar[d]^-{\left(\begin{smallmatrix}
b\\
ps
\end{smallmatrix}
\right)} \\
L \ar[r]_-{s}  & M  }
\end{equation*}
which is still a pushout diagram and now $\textmap{b}{ps}$ is
surjective. Thus, in all cases, $e$ lies in $\sigma'$ and $\sigma'$ is
a severe right Ore set which is what we set out to prove.
\end{proof}

If we do not assume that the maps in $\Sigma$, the set of matrices
between finitely generated projective modules, are injective then good
pushouts are relatively awkward to interpret; however, good
surjections are easy to understand. Let $T_{\alpha}$ be the cokernel
of $\alpha$. Then the cokernel of a map in the lower triangular
closure of $\Sigma$ is simply a module in the extension closure of the
set of modules $\{T_{\alpha}\}$ and every such module occurs as a
cokernel. Therefore good surjections are cokernels of maps from such
modules. Of course, it is clear that images of such modules in $M$
must lie in the kernel of the map from $M$ to
$M\otimes_{R}R_{\Sigma}$. Now suppose that all maps in $\Sigma$ are
injective; then good pushouts again are easy to describe. They are
simply injective maps whose cokernels lie in the extension closure of
the modules $T_{\alpha}$.

\begin{theorem}
\label{th:chargoodpush}
Let $\Sigma$ be a set of injective maps between finitely generated
projective modules over $R$. Let $S_{\Sigma}$ be the set of cokernels
of elements of $\Sigma$. Let $\cat{E}$ be the extension closure of
$S_{\Sigma}$. Then a map in the category of finitely presented modules
over $R$ is a good pushout if and only if it is injective and its
cokernel lies in $\cat{E}$. A map is a good surjection if and only if
it is surjective and its kernel is a factor of a module in $\cat{E}$.
\end{theorem}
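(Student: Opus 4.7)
The plan is to set up a preliminary dictionary for the lower triangular closure of $\Sigma$ and then read off each equivalence directly. Specifically, I will first show that, when every element of $\Sigma$ is injective, a map of finitely generated projectives lies in the lower triangular closure of $\Sigma$ if and only if it is injective with cokernel in $\cat{E}$, and moreover every object of $\cat{E}$ arises as such a cokernel. The forward statement is a routine induction using that $\bigl(\begin{smallmatrix}\alpha & 0\\ h & \beta\end{smallmatrix}\bigr)$ is injective whenever $\alpha$ and $\beta$ are, together with the short exact sequence $0\to\cok(\alpha)\to\cok\bigl(\begin{smallmatrix}\alpha & 0\\ h & \beta\end{smallmatrix}\bigr)\to\cok(\beta)\to 0$ provided by the snake lemma. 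The converse proceeds by induction on how $X\in\cat{E}$ is built from $S_{\Sigma}$: given an extension $0\to X'\to X\to X''\to 0$ and lower-triangular presentations of $X'$ and $X''$, lift the presentation of $X''$ along the surjection $X\to X''$ and then lift the resulting connecting map $P''\to X'$ along the projective cover of $X'$ to produce the needed block entry $h$.

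Good pushouts then follow easily. In the forward direction, a pushout $t$ of a lower-triangular $\tau\colon P\to Q$ along some $h\colon P\to M$ is a pushout of a module injection, hence itself injective, and $\cok(t)\cong\cok(\tau)\in\cat{E}$. In the reverse direction, given an injection $f\colon M\to N$ with $\cok(f)\in\cat{E}$, pick $\tau\colon P\to Q$ in the lower triangular closure so that $\cok(\tau)\cong\cok(f)$; lift the surjection $Q\to\cok(f)$ along $N\to\cok(f)$ to a map $g\colon Q\to N$ using projectivity of $Q$; observe that $g\tau$ lands in the kernel $M$ of $N\to\cok(f)$, producing a unique $h\colon P\to M$ with $g\tau=fh$. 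The standard comparison of the two short exact sequences $0\to P\to Q\to\cok(\tau)\to 0$ and $0\to M\to N\to\cok(f)\to 0$ that agree at their right-hand term shows this square is a pushout, exhibiting $f$ as the pushout of $\tau$ along $h$.

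For good surjections, if $u$ arises from a diagram $P\xrightarrow{\tau}P'\xrightarrow{a}N\xrightarrow{u}N'$ with $u=\cok(a)$ and $\tau a=0$, then $u$ is surjective with $\ker(u)=\im(a)$, and $\tau a=0$ forces $a$ to factor as $P'\twoheadrightarrow\cok(\tau)\to N$, so $\ker(u)$ is a quotient of $\cok(\tau)\in\cat{E}$. Conversely, given a surjection $u\colon N\to N'$ whose kernel is a quotient of some $T\in\cat{E}$, write $T=\cok(\tau)$ with $\tau\colon P\to P'$ in the lower triangular closure and compose $P'\twoheadrightarrow T\twoheadrightarrow\ker(u)\hookrightarrow N$ to obtain $a\colon P'\to N$. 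Then $\tau a=0$ and $\im(a)=\ker(u)$, so $u$ is the cokernel of $a$ and hence a good surjection.

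The only step that needs real care is the preliminary inductive characterisation of the lower triangular closure; once this is settled, each equivalence is a short exercise in projective lifting and the universal property of pushouts.
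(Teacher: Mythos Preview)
Your proof is correct and follows essentially the same route as the paper's: both rely on the fact that maps in the lower triangular closure of $\Sigma$ are injective with cokernel in $\cat{E}$ and that every object of $\cat{E}$ arises this way, and the paper's use of the surjection $\Hom(P,M)\to\Ext(T,M)$ for the converse on good pushouts is precisely your explicit projective-lifting argument in cohomological dress (the paper also omits the easy converse for good surjections, which you supply). One small caveat: your stated ``if and only if'' for membership in the lower triangular closure is too strong---for instance $4\colon\mathbb{Z}\to\mathbb{Z}$ has cokernel in the extension closure of $\{\mathbb{Z}/2\}$ but is not in the lower triangular closure of $\{2\colon\mathbb{Z}\to\mathbb{Z}\}$---but you neither prove nor use the false direction, so the argument stands.
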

\begin{proof}
The maps in the lower triangular closure of $\Sigma$ are
injective and their cokernels lie in $\cat{E}$. Therefore, their
pushouts have both these properties. 

Conversely, suppose that $f\colon M\rightarrow N$ is an injective map
whose cokernel is $T\in \cat{E}$. Choose a map $\alpha\colon
P\rightarrow Q$ in the lower triangular closure of $\Sigma$ whose
cokernel is $T$. Applying $\Hom(\ ,M)$ to the short exact sequence 
$\tses{P}{Q}{T}$, we see that the map from $\Hom(P,M)$ to $\Ext(T,M)$
is surjective and therefore there exists a map $j \colon P \rightarrow
M$ such that the pushout of $\alpha$ along $j$ is $f$ as required.

Now let $s \colon M \rightarrow N$ be a good surjection. So there
exists a map $\alpha\colon P\rightarrow Q$ in the lower triangular
closure of $\Sigma$, a map $a \colon Q \rightarrow M$, such that
$\alpha a = 0$ and $s$ is the cokernel of $a$. Then the map $a$
induces a map $b \colon \cok\alpha \rightarrow M$ with the same image
as $a$ and so $s$ is also the cokernel of $b$.
\end{proof}

We are in a position now to use our description of the severe right
Ore set generated by a set of maps between finitely generated
projective modules over $R$ as the compositions of good pushouts and
good surjections to give module-theoretic information about the
induction functor $\tensor_{R} R_{\Sigma}$. Firstly, we should like to
understand the kernel of the homomorphism from a module $M$ to $M
\tensor_{R} R_{\Sigma}$ and in particular to understand those modules
such that $M \tensor_{R} R_{\Sigma}=0$. Secondly, we should like to
understand precisely which maps between finitely generated projective
modules over $R$ are inverted by the ring homomorphism from $R$ to
$R_{\Sigma}$. 

\section{Induction}
\label{sec:ind}
 
In this section we shall assume that the maps in
$\Sigma$ are all injective. The effect of this is that we can and
should replace consideration of $\Sigma$ by the set of modules that
are the cokernels of the elements of $\Sigma$. For if $\Sigma'$ is
some different set of injective maps between finitely generated
projective modules over $R$ having the same set of cokernels (up to
isomorphism) as $\Sigma$ then clearly $R_{\Sigma}\cong R_{\Sigma'}$. 
Thus given a set $S$ of finitely presented modules of homological
dimension at most $1$ where $\cat{S} $ is the full subcategory of $
\fpmod{R}$ whose objects are isomorphic to modules in $S$, we define
$R_{\cat{S}} \cong R_{S}$ to be $R_{\Sigma}$ where $\Sigma$ is some
set of injective maps between finitely generated projective modules
whose set of cokernels is $S$.  

In the discussion of $R_{S}$, we need the notion of a torsion theory.
See section 5.1 of \cite{CohnFIR} for a brief summary.  Given any set
of modules $S$, we have an associated torsion theory generated by $S$;
a module $F$ is torsion-free if $\Hom(N,F)=0$ for all modules $N\in S$
and $T$ is torsion if $\Hom(T,F)=0$ for every torsion-free module. The
torsion theory is usually thought of as a pair
$\mathcal{T},\mathcal{F}$ where $\mathcal{T}$ is the class of torsion
modules and $\mathcal{F}$ is the class of torsion-free modules.  In
the best cases, one would hope that the torsion submodule of $M$ with
respect to the torsion theory generated by $S$ would be the kernel of
the homomorphism from $M$ to $M\tensor R_{S}$.

If $S$ is a set of
finitely presented modules of homological dimension at most $1$, then
the torsion modules have a useful description.

\begin{lemma}
\label{th:torsthgenbyfphd1}
Let $S$ be a set of finitely presented modules of homological
dimension at most $1$. Let $\mathcal{T},\mathcal{F}$ be the torsion
theory generated by $S$. Let $\cat{U}$ be the full subcategory of
modules that are factor of modules in $\cat{E}$, the extension closure
of $S$.  Then every finitely generated module in $\mathcal{T}$ is
isomorphic to some module in $\cat{U}$. If $T$ is a module in
$\mathcal{T}$, then every finitely generated submodule of $T$ lies in
a larger submodule isomorphic to a module in $\cat{U}$.
\end{lemma}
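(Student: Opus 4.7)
The plan is to prove both statements together, the first following from the second by setting $N = T$ when $T$ is itself finitely generated.

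First, $\cat{U} \subseteq \mathcal{T}$ is immediate: objects of $S$ are torsion by definition, so $\cat{E} \subseteq \mathcal{T}$ because $\mathcal{T}$ is closed under extensions, and $\cat{U} \subseteq \mathcal{T}$ because $\mathcal{T}$ is also closed under quotients.

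For the other direction I reduce the finitely generated submodule case to the single-element case. If generators $t_1,\dots,t_k$ of $N$ each lie in some finitely generated submodule $N_i' \subseteq T$ with $N_i' \in \cat{U}$, then $N \subseteq N' := \sum_i N_i' \subseteq T$. Since $\cat{E}$ is closed under finite direct sums (a direct sum of finite extensions of $S$-modules is itself a finite extension of such modules), $\bigoplus_i N_i' \in \cat{U}$, and $N'$ is a quotient of this direct sum, so $N' \in \cat{U}$.

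It therefore suffices to show that for every $T \in \mathcal{T}$ and every $t \in T$ there exist a finitely generated $E \in \cat{E}$ and a map $E \to T$ whose image contains $t$. Let $\mathcal{T}'$ be the class of modules with this property. Since $\mathcal{T}$ is the smallest class containing $S$ closed under quotients, extensions, and coproducts, it suffices to show that $\mathcal{T}'$ contains $S$ and enjoys these three closure properties. Membership of $S$ (cover $t$ by the identity), closure under quotients (cover a lift and compose), and closure under coproducts (an element has finite support, then take the direct sum of covers) are direct. The key case is extensions: given $0 \to A \to T \to B \to 0$ with $A, B \in \mathcal{T}'$ and $t \in T$, cover the image $\bar{t} \in B$ by some $\phi_B \colon E_B \to B$ with $E_B \in \cat{E}$; choose a finitely generated projective resolution $0 \to P_1 \to P_0 \to E_B \to 0$ (which exists since $E_B$, as a finite extension of modules of homological dimension at most one, has the same property); lift $P_0 \to E_B \to B$ to $\widetilde{\psi} \colon P_0 \to T$ using projectivity of $P_0$, and let $t' \in T$ be the image of a chosen lift of a preimage of $\bar{t}$, so that $t' $ reduces to $\bar t$ and hence $t - t' \in A$; the composite $P_1 \to P_0 \to T$ then factors through $A$; by the inductive hypothesis on $A$ one covers both $t - t'$ and the image of this composite by a single $\phi_A \colon E_A \to A$ with $E_A \in \cat{E}$; lift $P_1 \to A$ to $P_1 \to E_A$ using projectivity of $P_1$; form the pushout $E = P_0 \sqcup_{P_1} E_A$. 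By construction $E$ sits in $0 \to E_A \to E \to E_B \to 0$, hence lies in $\cat{E}$, and the maps $\widetilde{\psi}$ and $\phi_A$ agree on $P_1$ and assemble into a map $E \to T$ whose image contains $t' + (t - t') = t$.

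The main obstacle is the simultaneous covering at the extension step: choosing $\phi_A$ so that its image contains both $t - t'$ and the image of $P_1 \to A$. This is handled by applying the inductive hypothesis on $A$ to the single element $t - t'$ and to each generator of the image of $P_1 \to A$, then forming the corresponding finite direct sum in $\cat{E}$. The homological dimension hypothesis is essential throughout: it ensures the projective resolution of $E_B$ has finitely generated $P_1$, so the pushout $E$ remains finitely generated and in $\cat{E}$.
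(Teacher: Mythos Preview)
Your argument is correct but follows a different strategy from the paper's.  The paper first proves directly that $\cat{U}$ is closed under extensions (via a pullback and the surjectivity of $\Ext(E_2,E_1)\to\Ext(E_2,U_1)$ coming from homological dimension $\leq 1$), and then runs a Zorn's-lemma argument inside a fixed torsion module $T$: take a maximal submodule $T'$ with the required covering property, and if $T'\neq T$ use the fact that $T/T'$ receives a nonzero map from some module in $S$ to enlarge $T'$ by a single $S$-step (lift along a projective presentation, push out, and observe the result lies in $\cat{U}$).  Thus the paper only ever extends by one module of $S$ at a time and never invokes the abstract description of the torsion class.

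You instead use the standard (but not entirely trivial) fact that the torsion class generated by $S$ is the smallest class containing $S$ closed under quotients, extensions and coproducts, and then check that the covering property is inherited under each of these operations.  Your extension step is essentially the same pushout construction as the paper's, but carried out for a general $E_B\in\cat{E}$ rather than a single module in $S$; the trade-off is that you must simultaneously cover $t-t'$ and the finitely many images of generators of $P_1$, which you handle by your earlier direct-sum reduction.  One small remark: what you call the ``inductive hypothesis on $A$'' is really just the assumption $A\in\mathcal{T}'$ together with that direct-sum reduction; there is no induction running.  Your approach is slightly more structural and avoids Zorn's lemma and the separate proof that $\cat{U}$ is extension-closed, at the cost of importing the closure characterisation of torsion classes.
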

\begin{proof}
We show first that $\cat{U}$ is closed under extensions. For if we
have surjections $s_{i} \colon E_{i} \rightarrow U_{i}$ for $i=1,2$
where $E_{i}\in \cat{E}$ and a short exact sequence
$\tses{U_{1}}{U}{U_{2}}$ then we form the pullback 
\begin{equation*}
\pullses{U_{1}}{X}{E_{2}}{U}{U_{2}} 
\end{equation*}
which gives a surjection from $X$ to $U$ where $X$ is an extension of
$E_{2}$ on $U_{1}$. Since $E_{2}$ has homological dimension at most
$1$, the map from $\Ext(E_{2},E_{1})$ to $\Ext(E_{2},U_{1})$ is
surjective and consequently there exists an extension of $E_{2}$ on
$E_{1}$, $E$ with a surjection onto $X$ and hence onto $U$. Thus
$\cat{U}$ is closed under extensions. 

Suppose that $T$ is a torsion module. Then every nonzero factor of $T$
has a nonzero map from some module in $S$. Let $T_{1}$ and $T_{2}$
be submodules of $T$ such that for every finitely generated submodule
$N$ of $T_{i}$, there exists a module $U\in \cat{U}$ where $N\subset
U\subset T_{i}$, then the same holds for $T_{1}\oplus T_{2}$ and hence
for $T_{1}+T_{2}$ since $\cat{U}$ is closed under factors. Also a
directed union of such submodules is again such a module. So there
exists a maximal such submodule, call it $T'$. Assume that $T'\neq
T$. Let $\phi \colon E \rightarrow T/T'$ be a nonzero map where $E\in
S$. Choose a map between finitely generated projective modules
$\alpha\colon P\rightarrow Q$ such that $\cok\alpha=E$. Choose a map
$\beta\colon Q\rightarrow T$ that lifts $\phi$. Then the image of
$\alpha\beta$ must lie in $T'$ and therefore there exists a module
$U\subset T'$ where $U\in \cat{U}$ that contains the image of
$\alpha\beta$. Let $\gamma\colon P\rightarrow U$ be the induced
map. Consider the submodule $U'=U+\im\beta$. Then the image of $U'$ in
$T/T'$ is $\im\phi$. Also $U\cap \im\beta$ contains $\im
\alpha\beta$. It follows that $U'$ is a factor of the module $U_{1}$,
the pushout of $Q$ along the map $\gamma$ in the diagram below
\begin{equation*}
\pushses{P}{Q}{E}{U}{U_{1}} 
\end{equation*}
which shows also that $U_{1}\in \cat{U}$. Therefore, $U'\in
\cat{U}$. But any module in $\cat{U}$ certainly satisfies the
condition that every finitely generated submodule lies in a submodule
lying in $\cat{U}$; therefore, because the sum of two such submodules
is also a module of this type so is $T'+U'$ which contradicts the
maximality of $T'$. This contradiction implies that $T'=T$ and proves
the second part of the theorem. The first follows at once.
\end{proof}

Let $\Sigma$ be a set of injective maps between finitely generated
projective modules over $R$ and let $S$ be the set of cokernels of
elements of $\Sigma$; so $R_{S}=R_{\Sigma}$. It is worth pointing out
that if $M$ is torsion-free with respect to the torsion theory
generated by $S$ then it does not follow that $M$ embeds in
$M\otimes_{R}R_{S}$ since there may be a short exact sequence
$\tses{M}{N}{T_{1}}$ where $T_{1}$ lies in the extension closure of
$S$ and a homomorphism from $T_{2}$ in the extension closure of $S$ to
$N$ whose image intersects $M$; this intersection must then lie in the
kernel of the homomorphism from $M$ to $M\otimes_{R}R_{S}$. There are
conditions which make sure that this does not happen and we shall be
considering one such later in this paper but first we introduce a
related problem.

We should like to be able to describe the complete set of maps between
finitely generated projective modules that become invertible under the
ring homomorphism from $R$ to $R_{\Sigma}$. This is equivalent to
describing the set of finitely presented modules $\{M\}$ of
homological dimension at most $1$ over $R$ such
that $M\otimes_{R}R_{\Sigma}= 0= \Tor_{1}^{R}(M,R_{\Sigma})$ by
considering their presentations. In fact, we have another way to
recognise such modules.

\begin{theorem}
\label{th:invmap=invHom}
Let $R$ be a ring and let $\Sigma$ be a set of injective maps between
finitely generated projective modules over $R$. Let $M$ be a finitely
presented module of homological dimension at most $1$. Then $\Hom(M,\
) $ and $\Ext(M,\ ) $ vanish on $R_{\Sigma}$ modules if and only if $M
\tensor R_{\Sigma} = 0 = \Tor_{1}^{R}(M, R_{\Sigma}) $.
\end{theorem}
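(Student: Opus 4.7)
The plan is to translate both conditions into properties of the single map presenting $M$, and then use adjunction and Yoneda to line them up.

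Since $M$ has homological dimension at most $1$, choose a presentation $0 \to P \xrightarrow{\alpha} Q \to M \to 0$ with $P,Q$ finitely generated projective. First I would apply $-\otimes_R R_\Sigma$ to this resolution and use the flatness resolution's long exact Tor sequence to get
\begin{equation*}
0 \to \Tor_1^R(M,R_\Sigma) \to P\otimes_R R_\Sigma \xrightarrow{\alpha\otimes R_\Sigma} Q\otimes_R R_\Sigma \to M\otimes_R R_\Sigma \to 0.
\end{equation*}
Exactness shows immediately that $M\otimes_R R_\Sigma = 0 = \Tor_1^R(M,R_\Sigma)$ if and only if $\alpha\otimes R_\Sigma$ is an isomorphism of $R_\Sigma$-modules.

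Next I would apply $\Hom_R(-,N)$ for $N$ an arbitrary $R_\Sigma$-module to the same resolution, giving
\begin{equation*}
0 \to \Hom_R(M,N) \to \Hom_R(Q,N) \xrightarrow{\alpha^\ast} \Hom_R(P,N) \to \Ext_R^1(M,N) \to 0.
\end{equation*}
Thus $\Hom_R(M,-)$ and $\Ext_R^1(M,-)$ vanish on every $R_\Sigma$-module if and only if $\alpha^\ast$ is an isomorphism for every such $N$. Now I would invoke the standard adjunction: for any $R$-module $L$ and any $R_\Sigma$-module $N$, $\Hom_R(L,N) \cong \Hom_{R_\Sigma}(L\otimes_R R_\Sigma, N)$. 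Under this natural identification, $\alpha^\ast$ becomes $\Hom_{R_\Sigma}(\alpha\otimes R_\Sigma, N)$.

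Putting the two reductions together: the Hom/Ext condition is equivalent to $\Hom_{R_\Sigma}(\alpha\otimes R_\Sigma, N)$ being an isomorphism for every $R_\Sigma$-module $N$, which by the Yoneda lemma is in turn equivalent to $\alpha\otimes R_\Sigma$ being an isomorphism of $R_\Sigma$-modules, and we have already seen that this is the Tor/tensor condition. There is no real obstacle here; the only point to be careful about is that the adjunction identifies $\alpha^\ast$ canonically with $\Hom_{R_\Sigma}(\alpha\otimes R_\Sigma, N)$, so that the two reformulations match up simultaneously for all $N$, allowing the Yoneda step.
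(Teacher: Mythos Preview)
Your proof is correct and follows essentially the same route as the paper's: take a projective resolution $0\to P\xrightarrow{\alpha}Q\to M\to 0$, apply the adjunction $\Hom_R(L,N)\cong\Hom_{R_\Sigma}(L\otimes_R R_\Sigma,N)$ for $R_\Sigma$-modules $N$, and reduce both conditions to whether $\alpha\otimes R_\Sigma$ is an isomorphism. The paper organises things slightly differently---handling the $\Hom$/tensor and $\Ext$/$\Tor_1$ halves separately and using the split surjectivity of $P\otimes R_\Sigma\to Q\otimes R_\Sigma$ (once $M\otimes R_\Sigma=0$) to identify $\Ext(M,-)\cong\Hom(\Tor_1^R(M,R_\Sigma),-)$ on $R_\Sigma$-modules---but this is a packaging difference, not a substantive one, and your Yoneda step plays the same role.
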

\begin{proof}
  Of course, $\Hom(M,\ ) $ vanishes on $R_{\Sigma}$ modules if and
  only if $M \tensor R_{\Sigma}=0$. Let $\tses{P}{Q}{M} $ be some
  presentation of $M$ as $R$ module where $P, Q$ are finitely
  generated projective modules. Applying $\tensor R_{\Sigma}$ gives an
  exact sequence $P \tensor R_{\Sigma}\to Q \tensor R_{\Sigma} \to M
  \tensor R_{\Sigma}= 0$ so that the first map must be split
  surjective and its kernel is $\Tor_{1}^{R}(M, R_{\Sigma})$. On the
  other hand, applying $\Hom(\ ,X) $ for some $R_{\Sigma}$ module
  gives the exact sequence $\Hom(Q,X)\to \Hom(P,X)\to \Ext(M,X)\to 0 $
  and since $\Hom(K,\ ) = \Hom(K \tensor R_{\Sigma},\ ) $ on
  $R_{\Sigma}$ modules for any $R$ module we see that $\Ext(M,\ )
  \cong \Hom(\Tor_{1}^{R}(M, R_{\Sigma}),\ ) $
  vanishes on $R_{\Sigma}$ modules if and only if $\Tor_{1}^{R}(M,
  R_{\Sigma})=0$. 
\end{proof}

Given a set of injective maps between finitely generated projective
modules $\Sigma$, we shall use the notation $\cat{S}(\Sigma) $ for the
full subcategory of $\fpmod{R} $ of modules $M$ of homological
dimension at most $1$ such that $R_{\Sigma}$ inverts their
presentations or equivalently $ M \tensor R_{\Sigma} =0=
\Tor_{1}^{R}(M, R_{\Sigma})$. We call this the category of
$R_{\Sigma}$ trivial modules. Clearly $R_{\cat{S}(\Sigma)} =
R_{\Sigma}$. We want to describe some obvious closure conditions for
this category.

The last closure condition of the following lemma is at first sight a
little odd; however it will turn out to be useful to us later.

\begin{lemma}
\label{lem:closureSigmatriv}
Let $\Sigma$ be a set of injective maps between finitely generated
projective modules over the ring $R$. Then $\cat{S}(\Sigma)$ is closed
under extensions and closed under kernels of surjective maps. It is
also closed under cokernels of injective maps whose cokernel has
homological dimension $1$.

Finally, if $\map{\phi} {A}{B} $ is a map in $\cat{S}(\Sigma)$ whose
cokernel has homological dimension $1$ then $\cok\phi$ lies in
$\cat{S}(\Sigma)$ and $\im\phi$ and $\ker\phi$ also lie in
$\cat{S}(\Sigma)$ whenever they are finitely presented.
\end{lemma}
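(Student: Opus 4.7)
The plan is to verify each closure condition directly from the long exact sequences of $\Tor_*(-, R_\Sigma)$ (to check $M\tensor R_\Sigma=0$ and $\Tor_1^R(M,R_\Sigma)=0$) and of $\Ext^*$ (to check pd $\leq 1$), applied to the obvious short exact sequence in each case. Membership in $\cat{S}(\Sigma)$ is the conjunction of three independent vanishing conditions, so the arguments split into parallel pieces.

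For the first three properties, given $\tses{A}{B}{C}$ with $A,C\in\cat{S}(\Sigma)$ (closure under extensions), the sandwich $\Ext^2(C,-)=0\to \Ext^2(B,-)\to \Ext^2(A,-)=0$ forces pd $B\leq 1$, and the corresponding chunks of the $\Tor$ long exact sequence give $\Tor_1(B,R_\Sigma)=0$ and $B\tensor R_\Sigma=0$. For the kernel $\tses{K}{A}{B}$ of a surjection between objects of $\cat{S}(\Sigma)$, Schanuel's lemma applied to this sequence and a length-$\leq 1$ finite projective resolution of $B$ shows $K$ is finitely presented of pd $\leq 1$; then $\Tor_2(B,R_\Sigma)=0$ lets the $\Tor$ long exact sequence supply the remaining vanishings. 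For a cokernel $\tses{A}{B}{C}$ of an injection between objects of $\cat{S}(\Sigma)$ with pd $C\leq 1$ by hypothesis, $\Tor_2(C,R_\Sigma)=0$ and the $\Tor$ long exact sequence again finish the job.

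For the final statement, factor $\map{\phi}{A}{B}$ through its image as $A\twoheadrightarrow I\hookrightarrow B$, and set $K=\ker\phi$, $C=\cok\phi$. Right exactness of $\tensor R_\Sigma$ applied to $A\to B\to C\to 0$ gives $C\tensor R_\Sigma=0$ at once. Applying the $\Tor$ long exact sequence to $\tses{I}{B}{C}$, the vanishings $\Tor_2(C,R_\Sigma)=0=\Tor_1(B,R_\Sigma)=B\tensor R_\Sigma$ collapse a large portion and produce both $\Tor_1(I,R_\Sigma)=0$ and an isomorphism $\Tor_1(C,R_\Sigma)\cong I\tensor R_\Sigma$. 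On the other hand, right exactness of tensor on $A\twoheadrightarrow I$ forces $I\tensor R_\Sigma=0$ (since $A\tensor R_\Sigma=0$), so $\Tor_1(C,R_\Sigma)=0$ and hence $C\in\cat{S}(\Sigma)$. Now $I$ and $K$ are kernels of the surjections $B\twoheadrightarrow C$ and $A\twoheadrightarrow I$ between objects of $\cat{S}(\Sigma)$, so the closure under kernels of surjective maps just established places them in $\cat{S}(\Sigma)$; indeed, Schanuel's lemma ensures they are automatically finitely presented, so the hypothesis in the statement is harmlessly vacuous.

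I do not expect any real obstacle beyond careful bookkeeping. The only non-automatic observation is the Schanuel step showing that the kernel of a surjection between objects of $\cat{S}(\Sigma)$ is finitely presented, which hinges on the pd $\leq 1$ hypothesis built into the definition of $\cat{S}(\Sigma)$.
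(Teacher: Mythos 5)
Your proof is correct, and it is genuinely a different (dual) route from the paper's. You verify the defining conditions $M\tensor R_{\Sigma}=0=\Tor_{1}^{R}(M,R_{\Sigma})$, together with homological dimension at most $1$ read as vanishing of $\Ext^{2}(M,-)$, directly from long exact sequences of $\Tor(-,R_{\Sigma})$; the paper instead passes through its characterisation in Theorem \ref{th:invmap=invHom}, rewriting membership in $\cat{S}(\Sigma)$ as the vanishing of $\Hom(M,X)$ and $\Ext(M,X)$ for all $R_{\Sigma}$-modules $X$, and then applies $\Hom(-,X)$ to the very same short exact sequences. The two computations are strictly parallel, so neither buys much homologically, except that yours needs no appeal to Theorem \ref{th:invmap=invHom}. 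Two genuine differences are worth noting. First, for the final statement the paper treats $\im\phi$, $\cok\phi$ and $\ker\phi$ each directly from the sequences $\tses{\im\phi}{B}{\cok\phi}$ and $\tses{\ker\phi}{A}{\im\phi}$, whereas you establish $\cok\phi\in\cat{S}(\Sigma)$ first and then recycle the already-proved closure under kernels of surjections to handle $\im\phi$ and $\ker\phi$ --- a tidier reduction. Second, your pullback/Schanuel argument against a length-one resolution of the quotient by finitely generated projectives correctly shows that the kernel of a surjection of finitely presented modules onto a finitely presented module of homological dimension at most $1$ is automatically finitely presented; this both fills in a point the paper's proof glosses over (finite presentation of the kernel in the closure-under-kernels clause) and shows that the caveat ``whenever they are finitely presented'' in the last sentence of the lemma is redundant under the stated hypothesis on $\cok\phi$, which the paper does not observe. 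One small inaccuracy of emphasis: in the cokernel-of-an-injection case the vanishing $\Tor_{2}(C,R_{\Sigma})=0$ plays no role; the relevant segments are $\Tor_{1}(B,R_{\Sigma})\to\Tor_{1}(C,R_{\Sigma})\to A\tensor R_{\Sigma}$ and $B\tensor R_{\Sigma}\to C\tensor R_{\Sigma}\to 0$. This is harmless.
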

\begin{proof}
Extensions of modules of homological dimension at most $1$ and kernels
of surjective maps between modules of homological dimension at most
$1$ must have homological dimension at most $1$ and therefore applying
$\Hom(\ ,X) $ for any $R_{\Sigma}$  module $X$ to a relevant short
exact sequence shows each of the closure conditions in the first
paragraph.

From the short exact sequence $\tses{\im\phi}{B}{\cok\phi}$, it
follows that $\im\phi$ has homological dimension at most $1$. Applying
$\Hom(\ ,X) $ for $X$ an $R_{\Sigma}$ module to this short exact
sequence shows that $\Ext(\im\phi,X) =0$ for every such $X$ and
applying $\Hom(\ ,X)$ to the short exact sequence
$\tses{\ker\phi}{A}{\im\phi}$ shows that $\Hom(\im\phi,X) =0$ for
every such $X$; so $\im\phi\in \cat{S}(\Sigma)$ whenever $\im\phi$ is
finitely presented. Looking at the short exact sequence
$\tses{\im\phi}{B}{\cok\phi}$ again, we see that $\cok\phi$ is
finitely presented and satisfies $\Hom(\cok\phi,X) = 0 = \Ext(
\cok\phi,X) $ for every $R_{\Sigma}$ module $X$ so that $\cok\phi \in
\cat{S}(\Sigma)$. Finally, we show by reconsidering the short exact
sequence $\tses{\ker\phi}{A}{\im\phi}$ that $\Hom(\ker\phi,X) = 0 =
\Ext( \ker\phi,X)$   for every $R_{\Sigma}$ module $X$ and $\ker\phi$
has homological dimension at most $1$ so if it is finitely presented
then it too must lie in $\cat{S}(\Sigma)$.
\end{proof}

We shall say that a full subcategory of $\fpmod{R} $ whose objects are
modules of homological dimension at most $1$ satisfying the the
closure conditions in this lemma a pre-localising subcategory.

We begin with a characterisation for arbitrary universal localisations
of the kernel of the natural map from a finitely presented module to
the induced module and of when a finitely presented module becomes the
zero module under induction. 

\begin{theorem}
\label{th:kerindgen}
Let $\cat{E} $ be a subcategory of $\fpmod{R} $ closed under
extensions whose objects have homological dimension at most $1$. Let
$M$ be a finitely presented module. Then $m\in M$ lies in the kernel
of the homomorphism from $M$ to $M\tensor R_{\cat{E}}$ if and only if there
exist a short exact sequence $\tses{M}{N}{E}$ and a homomorphism
$\map{\phi}{E'}{N}$ where $E,E'\in \cat{E}$ and $m$ lies in the image
of $\phi$.

Further, $M\tensor R_{\cat{E}} =0$ if and only if there
exist a short exact sequence $\tses{M}{N}{E}$ and a homomorphism
$\map{\phi}{E'}{N}$ where $E,E'\in \cat{E}$ and $M$ lies in the image
of $\phi$.
\end{theorem}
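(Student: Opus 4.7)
The argument proceeds by translating the kernel condition through the identifications $M \otimes_R R_{\cat{E}} \cong M_{\sigma} = \varinjlim M_s$ (Theorem \ref{th:ulocind=dirlim}), then decomposing maps in $\sigma$ via Theorem \ref{th:sigmaisgpushgoodsurj} and recognising the pieces through Theorem \ref{th:chargoodpush}. Fix a set $\Sigma$ of injective maps between finitely generated projective modules whose cokernels generate $\cat{E}$ under extensions, so $R_{\cat{E}} = R_{\Sigma}$, and let $\sigma$ be the severe right Ore set generated by $\Sigma$. Because $\cat{E}$ is closed under extensions, Theorem \ref{th:chargoodpush} says precisely that the good pushouts are the injective maps with cokernel in $\cat{E}$ and the good surjections are the surjective maps whose kernel is a factor of some object of $\cat{E}$.

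\textbf{Sufficiency.} Suppose we are given $0 \to M \xrightarrow{t} N \to E \to 0$ and $\phi\colon E' \to N$ with $E, E' \in \cat{E}$ and $t(m) \in \im \phi$. Then $t$ is a good pushout, so $t \in \sigma$; by Lemma \ref{th:lsroc} the class of maps inverted by $\_ \otimes_R R_{\cat{E}}$ is a severe right Ore set containing $\Sigma$ and hence $\sigma$, so $t \otimes_R R_{\cat{E}}$ is an isomorphism. A short induction on the extension description of $\cat{E}$ shows that $X \otimes_R R_{\cat{E}} = 0$ for every $X \in \cat{E}$ (base case: the defining presentation of a $\Sigma$-cokernel; inductive step: right exactness of the tensor product). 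Therefore $\phi \otimes_R R_{\cat{E}} = 0$, so $t(m) \otimes 1 = 0$ in $N \otimes_R R_{\cat{E}}$, and inverting $t \otimes_R R_{\cat{E}}$ gives $m \otimes 1 = 0$.

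\textbf{Necessity.} Assume $m$ lies in the kernel. By Theorem \ref{th:ulocind=dirlim} there is some $s \in {_{M}\sigma}$ with $s(m) = 0$. Factor $s = tu$ via Theorem \ref{th:sigmaisgpushgoodsurj}, with $t\colon M \to N$ a good pushout and $u\colon N \to M_s$ a good surjection. Theorem \ref{th:chargoodpush} supplies the short exact sequence $0 \to M \xrightarrow{t} N \to E \to 0$ with $E \in \cat{E}$, and exhibits $\ker u$ as a quotient of some $E' \in \cat{E}$; composing the quotient map $E' \to \ker u$ with the inclusion $\ker u \to N$ yields $\phi\colon E' \to N$. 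Finally $u(t(m)) = s(m) = 0$ places $t(m)$ inside $\ker u = \im \phi$, as required.

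\textbf{Second statement.} Sufficiency is immediate from the first part applied to each $m \in M$, since every element of $M \otimes_R R_{\cat{E}}$ is of the form $(m \otimes 1) r$. Conversely, $M \otimes_R R_{\cat{E}} = 0$ is equivalent via Theorem \ref{th:rOlfp=fpul} to $[M] = 0$ in $\fpmod{R}_{\sigma}$, hence to $[1_M] = 0$, which by the explicit description of maps in a right Ore localisation recalled in Section \ref{sec:sevrO} means that some $s \in {_{M}\sigma}$ is the zero map in $\fpmod{R}$. Factoring $s = tu$ as in the necessity argument, the identity $tu = 0$ forces $t(M) \subseteq \ker u = \im \phi$, so $M$, identified with $t(M) \subseteq N$, lies in the image of $\phi$. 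The proof presents no substantive obstacle beyond coordinating the earlier results; the only points requiring genuine care are the identification of $M$ with its image $t(M)$ throughout, and the use of extension-closedness of $\cat{E}$ to upgrade the conclusion of Theorem \ref{th:chargoodpush} from membership in the extension closure of some generating set to membership in $\cat{E}$ itself.
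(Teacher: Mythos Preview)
Your proof is correct and follows essentially the same route as the paper's own argument: reduce to the vanishing of a map in $\fpmod{R}_{\sigma}$, use the Ore description to find $s\in\sigma$ annihilating it, factor $s=tu$ via Theorem~\ref{th:sigmaisgpushgoodsurj}, and read off the desired data from Theorem~\ref{th:chargoodpush}. The paper phrases the first step as ``$l_{m}\colon R\to M$ becomes zero in the localisation, hence $l_{m}s=0$ for some $s$,'' whereas you use the direct-limit description $M_{\sigma}=\varinjlim M_{s}$ to obtain $s(m)=0$; these are the same condition, and the second statement is handled identically by replacing $l_{m}$ with $1_{M}$. Your write-up is simply more explicit about the sufficiency direction and about identifying $M$ with $t(M)$, points the paper leaves to the reader.
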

\begin{proof}
The functor $\tensor R_{\cat{E}}$ is equivalent to the right Ore
localisation at the  severe right Ore set $\Pi$ generated by a set of
presentations of the modules in $\cat{E}$. So $m$ lies in the kernel
of the homomorphism from $M$ to $M\tensor R_{\cat{E}}$ if and only if
the map $\map{l_{m}}{R}{M}$ given by $l_{m}(r) =mr$ becomes the zero
map over $R_{\cat{E}}$ which holds if and only if there exists a map $s$ 
in $\Pi$ such that $l_ms=0$.

However, $s=iu$ where $i$ is a good pushout and $u$ is a good
surjection. The result follows at once from our description of good
pushouts and good surjections in theorem \ref{th:chargoodpush} in the
previous section.

The second result follows by taking the identity map on $M$ which must
become the zero map over $R_{\cat{E}}$ if and only if $M\tensor
R_{\cat{E}}=0$.  
\end{proof}

We do not expect to have a better theorem than this for arbitrary
universal localisations; however, there is a relatively common
situation where we can prove a better theorem.

\begin{theorem}
\label{th:keristorsion}
Let $\cat{E}$ be a pre-localising subcategory of
$\fpmod{R} $. Assume that the kernel of any map in $\cat{E}$ is a
torsion module with respect to the torsion theory generated by
$\cat{E}$. 
Then the kernel of
the homomorphism from a finitely presented module $M$ to $M\tensor
R_{\cat{E}}$ is the torsion submodule of $M$ with respect to the
torsion theory generated by $\cat{E}$.

In particular, if $M $ is torsion-free with respect to this torsion
theory then $M$ is an $R$-submodule of $M\tensor R_{\cat{E}}$.
\end{theorem}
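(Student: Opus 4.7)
The plan is to prove the two inclusions $\mathrm{tors}(M) \subseteq \ker(M \to M\tensor R_{\cat{E}})$ and $\ker(M \to M\tensor R_{\cat{E}}) \subseteq \mathrm{tors}(M)$, where $\mathrm{tors}(M)$ denotes the torsion submodule of $M$ with respect to the torsion theory generated by $\cat{E}$. The first inclusion is essentially formal and does not use the new hypothesis on kernels of maps in $\cat{E}$; the second is where the hypothesis enters and is the main obstacle.

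For the first inclusion, let $t$ lie in $\mathrm{tors}(M)$. Since $Rt$ is finitely generated, Lemma \ref{th:torsthgenbyfphd1} produces a submodule $U$ of $\mathrm{tors}(M)$ with $Rt \subseteq U \subseteq M$, such that $U$ is a factor of some $E \in \cat{E}$. This yields a composition $E \twoheadrightarrow U \hookrightarrow M$ whose image contains $t$. Composing further with the natural map $M \to M\tensor R_{\cat{E}}$ factors through $E \tensor R_{\cat{E}}$, which vanishes because $E$ has a presentation inverted by $R \to R_{\cat{E}}$ (equivalently, because $R \to R_{\cat{E}}$ is a ring epimorphism and any map from $E$ to an $R_{\cat{E}}$-module is zero). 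Hence $t$ maps to zero.

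For the second inclusion, suppose $m \in M$ lies in the kernel of $M \to M\tensor R_{\cat{E}}$. Theorem \ref{th:kerindgen} supplies a short exact sequence $\tses{M}{N}{E}$ with $E \in \cat{E}$ and a homomorphism $\map{\phi}{E'}{N}$ with $E' \in \cat{E}$ such that $m$ lies in $\im \phi$. Composing $\phi$ with the projection $\map{\pi}{N}{E}$ gives a map $\pi\phi \colon E' \to E$ between objects of $\cat{E}$. The decisive observation, and the place where the new hypothesis is invoked, is that $\ker(\pi\phi) = \phi^{-1}(M)$; thus by assumption $\ker(\pi\phi)$ is a torsion module. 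Its image under $\phi$ is $\im\phi \cap M$, which contains $m$ and is a quotient of a torsion module, hence torsion. This places $m$ in $\mathrm{tors}(M)$.

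The main obstacle is precisely the transition from the structural description in Theorem \ref{th:kerindgen}, which exhibits $m$ merely as living in the image of some map from $\cat{E}$ into an extension of $M$, to the stronger conclusion that $m$ lies in a torsion submodule of $M$ itself; this passage is exactly what the hypothesis on kernels of maps in $\cat{E}$ was designed to permit. The final ``in particular'' statement then follows immediately: if $M$ is torsion-free, the kernel of $M \to M\tensor R_{\cat{E}}$ is zero, so $M$ embeds as an $R$-submodule of $M\tensor R_{\cat{E}}$.
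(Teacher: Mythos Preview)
Your proof is correct and follows essentially the same route as the paper's own argument: both invoke Theorem~\ref{th:kerindgen} to produce the extension $\tses{M}{N}{E}$ and the map $\phi\colon E'\to N$, then pass to the composite $E'\to N\to E$, use the hypothesis that its kernel is torsion, and observe that $\phi$ carries this kernel onto $\im\phi\cap M\ni m$. The only difference is that you spell out the easy inclusion $\mathrm{tors}(M)\subseteq\ker(M\to M\tensor R_{\cat{E}})$ via Lemma~\ref{th:torsthgenbyfphd1}, whereas the paper leaves it implicit.
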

\begin{proof}
  Using the last theorem, we see that $m\in M$ lies in the kernel of
  the homomorphism from $M$ to $M\tensor R_{\cat{E}}$ if and only if
  there exist a short exact sequence $\tses{M}{N}{E}$ and a map
  $\map{\phi}{E'}{N} $ where $E,E'\in \cat{E}$ such that the image of
  $\phi$ contains $m$. We consider the induced map from $E'$ to
  $E$. By assumption, the kernel of this map $K$ ,is torsion with
  respect to the torsion theory generated by $\cat{E}$ and the image
  of $K$ in $N$ lies in $M$, is torsion and must contain $m$. The
  result follows.
\end{proof}

There are a couple of other ways in which the condition that the
kernel of a map in a pre-localising category should be torsion with
respect to the torsion theory generated by the pre-localising category
is decisive for us. The next two theorems show that this condition
forces the category of $R_{\cat{S}}$-trivial modules to be just
$\cat{S}$. Then we show that the ring homomorphism from $R$ to
$R_{\cat{S}}$ is stably flat which is the condition for the derived
category of $R_{\cat{S}}$ modules to be the derived category of $R$
modules which in turn implies a localisation sequence in algebraic
$K$-theory. 

\begin{theorem}
\label{th:StrivisS}
Let $\cat{S}$ be a pre-localising subcategory
of $\fpmod{R} $. Assume that the kernel of any map in $\cat{E}$ is a
torsion module with respect to the torsion theory generated by
$\cat{E}$. 
Let $\Sigma$ be a set of presentations of all the modules
in $\cat{S}$. Then $\cat{S}= \cat{S}(\Sigma)$. 
\end{theorem}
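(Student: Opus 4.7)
The inclusion $\cat{S}\subseteq \cat{S}(\Sigma)$ is immediate: each $M\in\cat{S}$ admits a projective presentation in $\Sigma$, and $\otimes_R R_\Sigma$ inverts this presentation, forcing $M\otimes R_\Sigma=0$ and $\Tor_1^R(M,R_\Sigma)=0$.

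For the reverse, let $M\in\cat{S}(\Sigma)$. The hypothesis of the theorem is precisely what is required to apply Theorem~\ref{th:keristorsion} to the pre-localising subcategory $\cat{S}$; since $R_\cat{S}=R_\Sigma$ and $M\otimes R_\Sigma=0$, that theorem forces $M$ to coincide with its own torsion submodule, so $M$ is torsion with respect to the torsion theory generated by $\cat{S}$. Now Lemma~\ref{th:torsthgenbyfphd1}, applied to $\cat{S}$ (which is its own extension closure, being pre-localising), realises $M$ as a quotient of some $E\in\cat{S}$, yielding a short exact sequence $0\to K\to E\to M\to 0$. Chasing the long exact Tor sequence, using $E\in\cat{S}\subseteq\cat{S}(\Sigma)$, $M\in\cat{S}(\Sigma)$, and $\Tor_2^R(M,R_\Sigma)=0$ (since $M$ has homological dimension at most $1$), one sees that $K$ is finitely presented of homological dimension at most $1$ and itself belongs to $\cat{S}(\Sigma)$.

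My plan is to finish by upgrading the membership $K\in\cat{S}(\Sigma)$ to $K\in\cat{S}$. Once that is done, pre-localising closure condition~(3) applied to the injection $K\hookrightarrow E$ between modules of $\cat{S}$, whose cokernel $M$ has homological dimension at most $1$, places $M\in\cat{S}$, completing the proof.

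The principal obstacle is therefore showing $K\in\cat{S}$. My intended tool is the severe right Ore decomposition of Theorem~\ref{th:sigmaisgpushgoodsurj}: the presentation map of $M$ lies in $\sigma$, so factors as a good pushout followed by a good surjection, and Theorem~\ref{th:chargoodpush} (valid because the maps in $\Sigma$ are injective) describes both pieces concretely in terms of modules and maps related to $\cat{S}$. Chasing this factorisation through the kernel diagram for $E\twoheadrightarrow M$ should exhibit $K$ as the kernel of some map $\psi$ between modules of $\cat{S}$ whose cokernel also lies in $\cat{S}$ (in particular has homological dimension at most $1$), at which point pre-localising closure condition~(4) applied to $\psi$ yields $K\in\cat{S}$. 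The technical delicacy lies in tracking which ambient modules remain in $\cat{S}$ throughout the decomposition; the hypothesis that kernels of $\cat{S}$-maps are torsion is exactly what keeps the argument from drifting out of $\cat{S}$ during this chase.
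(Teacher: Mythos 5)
Your first half reproduces the paper's argument: the inclusion $\cat{S}\subseteq\cat{S}(\Sigma)$ is trivial, Theorem~\ref{th:keristorsion} makes $M\in\cat{S}(\Sigma)$ torsion, Lemma~\ref{th:torsthgenbyfphd1} (with the extension closure of $\cat{S}$ equal to $\cat{S}$) gives a short exact sequence $\tses{K}{E}{M}$ with $E\in\cat{S}$, and the Tor sequence gives $K\tensor R_{\Sigma}\cong\Tor_{1}^{R}(M,R_{\Sigma})=0$ and $\Tor_{1}^{R}(K,R_{\Sigma})=0$. But from there your argument is a plan, not a proof, and the plan has two genuine problems. First, the Tor chase does \emph{not} show that $K$ is finitely presented: it shows $K$ is finitely generated of homological dimension at most $1$, but over a non-coherent ring the kernel of a surjection of finitely presented modules need not be finitely presented, so $K$ need not even be an object of $\fpmod{R}$, and the statement ``$K\in\cat{S}(\Sigma)$'' (let alone $K\in\cat{S}$) may not make sense. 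Second, the step you identify as the principal obstacle --- upgrading to $K\in\cat{S}$ by chasing the good-pushout/good-surjection factorisation --- is exactly the part left unproved, and it is asking for more than the theorem needs.

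The paper's proof avoids $K$ entirely as an object. Since $K\tensor R_{\Sigma}=0$, $K$ is torsion with respect to the torsion theory generated by $\cat{S}$, so by Lemma~\ref{th:torsthgenbyfphd1} the finitely generated module $K$ is a homomorphic image of some $T'\in\cat{S}$. Composing $T'\twoheadrightarrow K\hookrightarrow E$ gives a map $\map{\psi}{T'}{E}$ between objects of $\cat{S}$ whose cokernel is $M$, a module of homological dimension at most $1$; the final closure condition in the definition of a pre-localising subcategory (the ``odd'' clause of Lemma~\ref{lem:closureSigmatriv}) then yields $M=\cok\psi\in\cat{S}$ directly. Note that this clause only requires the cokernel to have homological dimension at most $1$ and never asks $K$ to be finitely presented or to lie in $\cat{S}$, which is precisely why the paper's route closes while yours stalls. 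If you replace your final two paragraphs by this composition argument, your proof is complete; as written, it has a genuine gap.
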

\begin{proof}
  Let $M\in \cat{S}(\Sigma)$. Then by the previous theorem, $M$ is a
  torsion module with respect to the torsion theory generated by
  $\cat{S}$. Since $\cat{S}$ consists of modules whose homological
  dimension is at most $1$, there exists a short exact sequence
  $\tses{N}{T}{M}$ where $T\in \cat{S}$. Since $M\in \cat{S}(\Sigma)$,
  $N \tensor R_{\Sigma} \cong\Tor_{1}^{R}(M,R_{\Sigma}) = 0$ and so
  $N$ is also a torsion module with respect to the torsion theory
  generated by $\cat{S}$ and so there exists a surjective map from
  some $T'\in \cat{S}$ to $N$ which gives us a map from $T'$ to $T$
  with cokernel $M$ which is a module of homological dimension at most
  $1$. By the definition of a pre-localising subcategory of
  $\fpmod{R} $, $M\in \cat{S}$.
\end{proof}
 
We recall that Ranicki and Neeman introduced the notion of a stably
flat ring extension. They say that $S$ is a stably flat $R$ ring if
and only if $\Tor^{R}_{i}(S,S)=0 $ for all $i$. They show that there
is a long exact sequence in algebraic $K$-theory for universal
localisation when $R_{\Sigma}$  is a stably flat $R$ ring. We note
that our condition on a pre-localising category implies that the
universal localisation is often stably flat.

\begin{theorem}
\label{th:locisstflat}
Let $\cat{E}$ be a pre-localising subcategory of $\fpmod{R} $ and
assume that kernel of maps in $\cat{E}$ are torsion. Then
$R_{\cat{E}}$ is a stably flat $R$ ring. 
\end{theorem}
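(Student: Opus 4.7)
The plan is to realise $R_{\cat{E}}$ as a filtered colimit of finitely presented $R$-modules whose higher $\Tor$ into $R_{\cat{E}}$ can be controlled, and then reduce the vanishing to the fact, from Theorem~\ref{th:StrivisS}, that $\cat{E} = \cat{S}(\Sigma)$ where $\Sigma$ is any set of presentations of the modules of $\cat{E}$. Fix such a $\Sigma$, so that $R_{\cat{E}} = R_{\Sigma}$, and let $\sigma$ be the severe right Ore set generated by $\Sigma$. By Theorem~\ref{th:ulocind=dirlim}, $R_{\Sigma} \cong \varinjlim_{s \in {_{R}\sigma}} R_{s}$; since $\Tor_{i}^{R}$ commutes with filtered colimits, it suffices to show $\Tor_{i}^{R}(R_{s}, R_{\Sigma}) = 0$ for every $s \in {_{R}\sigma}$ and every $i \geq 1$.

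By Theorem~\ref{th:sigmaisgpushgoodsurj}, I would factor $s = tu$ with $t \colon R \to N$ a good pushout and $u \colon N \to R_{s}$ a good surjection. Theorem~\ref{th:chargoodpush}, applied after replacing $\Sigma$ by an injective set of presentations, produces short exact sequences
\[ 0 \to R \to N \to T \to 0 \qquad \text{and} \qquad 0 \to K \to N \to R_{s} \to 0, \]
with $T \in \cat{E}$ and $K$ a quotient of some $E \in \cat{E}$. Under the identification $\cat{E} = \cat{S}(\Sigma)$, every $E' \in \cat{E}$ has $E' \otimes_{R} R_{\Sigma} = 0 = \Tor_{1}^{R}(E', R_{\Sigma})$, and since $E'$ has homological dimension at most one, $\Tor_{i}^{R}(E', R_{\Sigma}) = 0$ for every $i \geq 1$. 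Feeding the first sequence into the long exact $\Tor$-sequence yields $\Tor_{i}^{R}(N, R_{\Sigma}) = 0$ for $i \geq 1$; feeding in the second, together with the fact that $u \otimes R_{\Sigma}$ is an isomorphism because $u \in \sigma$, yields $\Tor_{1}^{R}(R_{s}, R_{\Sigma}) \cong K \otimes_{R} R_{\Sigma}$ and $\Tor_{i}^{R}(R_{s}, R_{\Sigma}) \cong \Tor_{i-1}^{R}(K, R_{\Sigma})$ for $i \geq 2$. Since $K$ is a quotient of $E \in \cat{E}$, $K \otimes R_{\Sigma}$ is a quotient of $E \otimes R_{\Sigma} = 0$, which settles the case $i = 1$.

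What remains is the vanishing of $\Tor_{j}^{R}(K, R_{\Sigma})$ for all $j \geq 1$. The plan is to upgrade this to the stronger claim that every torsion module $T$, with respect to the torsion theory generated by $\cat{E}$, satisfies $\Tor_{j}^{R}(T, R_{\Sigma}) = 0$ for $j \geq 1$; this applies to $K$ since $K$ is a quotient of $E \in \cat{E}$ and so is torsion. Using Lemma~\ref{th:torsthgenbyfphd1} to find $E'' \in \cat{E}$ surjecting onto a given finitely generated torsion module (and reducing to the finitely generated case by filtered colimits), I would obtain a short exact sequence $0 \to L \to E'' \to T \to 0$ and reduce the higher $\Tor$ computation to showing $L \otimes R_{\Sigma} = 0$, then iterate on $L$. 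The main obstacle is precisely this torsion-of-the-kernel step, and it is exactly where the standing hypothesis that kernels of maps in $\cat{E}$ are torsion is used: by choosing the presentation $E'' \to T$ so that the relevant map can be factored internally to $\cat{E}$, the hypothesis forces $L$ to be torsion and the recursion closes.
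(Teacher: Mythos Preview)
Your overall strategy coincides with the paper's: realise $R_{\cat{E}}$ as a filtered colimit, use the good-pushout/good-surjection factorisation to reduce to a $\Tor$ computation on a torsion module $K$, and then set up a recursion using short exact sequences $0\to L\to E''\to T\to 0$ with $E''\in\cat{E}$. The difference is that the paper uses only the directed system of good pushouts $\hat R=\varinjlim M_{t}$ and a single kernel $K=\ker(\hat R\to R_{\cat{E}})$, but this is cosmetic.

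The genuine gap is in your final paragraph. You assert that for an arbitrary finitely generated torsion module $T$ one can ``choose the presentation $E''\to T$ so that the relevant map can be factored internally to $\cat{E}$'', forcing $L=\ker(E''\to T)$ to be torsion. But the hypothesis only says that kernels of maps \emph{between objects of $\cat{E}$} are torsion; it says nothing about kernels of maps from $\cat{E}$ to arbitrary torsion modules, and a general torsion $T$ need not embed in any object of $\cat{E}$. So as stated the recursion does not close.

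The paper repairs exactly this point by proving the vanishing of $\Tor_{i}(\,\cdot\,,R_{\cat{E}})$ not for all torsion modules, but only for finitely generated torsion modules that are \emph{submodules of some object of $\cat{E}$}. For such $T\subset F$ with $F\in\cat{E}$, the composite $E''\to T\hookrightarrow F$ is a map in $\cat{E}$, so its kernel $L$ is torsion by hypothesis; moreover $L\subset E''\in\cat{E}$, so $L$ is again of the same restricted type and the induction goes through. One must then check that your specific $K$ falls into this restricted class: from $0\to R\to N\to T\to 0$ with $T\in\cat{E}$ and $K\subset N$ torsion, the paper argues that $K$ injects into $T$ (the obstruction $K\cap R$ is a torsion submodule of $R$, which is dealt with there). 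Your write-up needs this two-step structure --- restrict the class, then verify membership --- rather than the unsubstantiated phrase ``factored internally to $\cat{E}$''.
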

\begin{proof}
  Let $\tau$ be the set of good pushouts whose domain is $R$. Given
  $t\in \tau$, we call the codomain of $t$, $M_{t}$. Note that $\tau$
  is a directed system and we let $\hat{R}= \varinjlim_{t\in\tau}
  M_{t}$. Then the map from $\hat{R}$ to $R_{\cat{E}}= \hat{R}\tensor
  R_{\cat{E}}$ is surjective and its kernel $K$ is torsion with
  respect to the torsion theory generated by $\cat{E}$.

  It is clear that $\Tor_{i}^{R}(\hat{R}, R_{\cat{E}}) =0$ since
  $\hat{R}$ is a direct limit of the modules $M_{t}$ and each of these
  is an extension of $R$ by a module in $\cat{E}$ and $\Tor_{i}^{R}(E,
  R_{\cat{E}}) =0 $ for every $E\in \cat{E}$. Thus it is enough to
  show that $ \Tor_{i}^{R}(K, R_{\cat{E}}) =0$. Regarding $K$ as the
  direct limit of its finitely generated torsion submodules, it is
  enough to show that $\Tor_{i}^{R}(\ , R_{\cat{E}}) $ vanishes on
  finitely generated modules torsion with respect to the torsion
  theory generated by $\cat{E}$ that are submodules of the modules
  $M_{t}$. We begin by showing that $\Tor_{i}^{R}(\ , R_{\cat{E}}) $
  vanishes on finitely generated modules torsion with respect to the
  torsion theory generated by $\cat{E}$ that are submodules of the
  modules in $\cat{E}$.  

  Let $T\subset F$ be such a module (where $F\in \cat{E}$ ) and choose
  some short exact sequence $\tses{L}{E}{T}$ where $E\in \cat{E}$. $L$
  must be torsion because it in the kernel of the map from $E$ to $F$.
  Firstly, $\Tor_{1}^{R}(T, R_{\cat{E}}) = L \tensor R_{\cat{E}}=0$
  and so $\Tor_{1}^{R}(\ , R_{\cat{E}}) $ vanishes on arbitrary
  torsion submodules of modules in $\cat{E}$. Secondly, $\Tor_{i+1}(T,
  R_{\cat{E}}) \cong \Tor_{i}(L, R_{\cat{E}}) $ and so the assumption
  that $\Tor_{i}(\ , R_{\cat{E}}) $ vanishes on arbitrary torsion
  submodules of modules in $\cat{E}$ implies that $\Tor_{i+1}(\ ,
  R_{\cat{E}})$ also vanishes on all finitely generated torsion
  submodules of modules in $\cat{E}$ and hence vanishes on arbitrary
  torsion submodules of modules in $\cat{E}$. Thus we are done by
  induction.  

  Now let $U$ be a finitely generated torsion submodule of some
  $M_{t}$. We choose some short exact sequence $\tses{L}{E}{U}$ where
  $E\in \cat{E}$.  Consider the short exact sequence
  $\tses{R}{M_{t}}{E_{t}}$ where $E_{t}\in \cat{E}$. The kernel of the
  induced map from $E$ to $E_{t}$ is a torsion module $K\supset L$ and
  $K/L\subset R$ from which it follows that $K=L$ and $U$ is a
  submodule of $E_{t}$ and we have shown in the previous paragraph
  that $\Tor_{i}(U, R_{\cat{E}})=0$. It follows that $\Tor_{i}(K,
  R_{\cat{E}})=0$ and hence from the short exact sequence
  $\tses{K}{\hat{R}}{R_{\cat{E}}}$ that $\Tor_{i}^{R}(R_{\cat{E}},
  R_{\cat{E}}) =0$ for all $i>0$.  
\end{proof}

Finally we state what our localisation sequence in algebraic
$K$-theory is. 

\begin{theorem}
\label{th:localgKth}
Let $\cat{E}$ be a pre-localising subcategory of $\fpmod{R} $ and
assume that kernels of maps in $\cat{E}$ are torsion. Then there is a
long exact sequence in algebraic $K$-theory 
\begin{equation*}
\dots K_{i}(\cat{E})\to K_{i}(R)\to K_{i}(R_{\cat{E}})\to \dots
K_{0}(R)\to K_{0}(R_{\cat{E}})     
\end{equation*}
\end{theorem}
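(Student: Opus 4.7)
The plan is to reduce everything to the Ranicki--Neeman localisation theorem, for which we have already done essentially all of the work. Recall that in \cite{RanNeeman} it is shown that whenever a universal localisation $R\to R_{\Sigma}$ (for $\Sigma$ a set of maps between finitely generated projective modules) is stably flat, there is a long exact sequence in algebraic $K$-theory
\begin{equation*}
\dots K_{i}(\cat{S}(\Sigma))\to K_{i}(R)\to K_{i}(R_{\Sigma})\to \dots K_{0}(R)\to K_{0}(R_{\Sigma})
\end{equation*}
whose first term is the $K$-theory of the exact category of $R_{\Sigma}$-trivial modules. Thus the whole theorem reduces to two points: verifying stable flatness and identifying the fibre term with $K_{i}(\cat{E})$.

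First I would choose a set $\Sigma$ of injective maps between finitely generated projective modules whose cokernels run through (a representative set of) the objects of $\cat{E}$; this is possible because every object of a pre-localising subcategory has homological dimension at most $1$. By construction $R_{\cat{E}}=R_{\Sigma}$, so the statement of the theorem is really about the universal localisation $R\to R_{\Sigma}$. Theorem \ref{th:locisstflat} applied to $\cat{E}$ (whose hypotheses are exactly those of the present theorem) gives $\Tor_{i}^{R}(R_{\Sigma},R_{\Sigma})=0$ for $i>0$, so $R_{\Sigma}$ is stably flat over $R$ and the Ranicki--Neeman sequence applies.

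The second step is to identify the fibre term. By Theorem \ref{th:StrivisS}, again under our hypothesis that kernels of maps in $\cat{E}$ are torsion, we have $\cat{S}(\Sigma)=\cat{E}$ as exact subcategories of $\fpmod{R}$. Hence $K_{i}(\cat{S}(\Sigma)) = K_{i}(\cat{E})$ and substituting into the Ranicki--Neeman sequence yields the stated long exact sequence.

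The only potentially delicate step is making sure that the exact-category structure used to define $K_{i}(\cat{S}(\Sigma))$ in \cite{RanNeeman} matches the one we want on $\cat{E}$; but since $\cat{E}$ is by assumption a pre-localising subcategory (closed under extensions, kernels of surjections, and cokernels of injections whose cokernel has homological dimension at most $1$), it inherits a natural exact structure from $\fpmod{R}$ which agrees with the exact structure on $\cat{S}(\Sigma)$ under the identification of Theorem \ref{th:StrivisS}. Everything else is a direct citation.
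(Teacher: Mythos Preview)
Your proof is correct and follows exactly the paper's own approach: the paper's proof consists of the single sentence ``This follows at once from the previous two theorems and \cite{RanNeeman},'' and your proposal is simply a careful unpacking of that sentence, invoking Theorem~\ref{th:locisstflat} for stable flatness and Theorem~\ref{th:StrivisS} for the identification $\cat{S}(\Sigma)=\cat{E}$. Your extra paragraph checking that the exact structures agree is a reasonable gloss the paper leaves implicit.
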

\begin{proof}
This follows at once from the previous two theorems and
\cite{RanNeeman}. 
\end{proof}

\end{document}